\title{The intersections of typical Besicovitch sets with lines}
\author{Tamás Kátay}
\date{\today}
\newcommand{\defeq}{\vcentcolon=}
\newtheorem{theorem}{Theorem}[section]
\newtheorem{prop}[theorem]{Proposition}
\newtheorem{lemma}[theorem]{Lemma}
\newtheorem{cor}[theorem]{Corollary}
\theoremstyle{definition}
\newtheorem{defi}[theorem]{Definition}
\theoremstyle{definition}
\theoremstyle{definition}
\theoremstyle{definition}
\theoremstyle{definition}
\def\Nat{\ensuremath\mathbb{N}}
\def\Real{\ensuremath\mathbb{R}}
\def\Lines{\ensuremath\mathcal{L}}
\def\Compact{\ensuremath\mathcal{K}}
\def\Compsub{\ensuremath\mathcal{C}}
\begin{document}
	
	\maketitle
	
	\begin{abstract}
		We show that a typical Besicovitch set $B$ has intersections of measure zero with every line not contained in it. Moreover, every line in $B$ intersects the union of all the other lines in $B$ in a set of measure zero.
	\end{abstract}
	
	\section{Introduction}

	A Besicovitch set is a set $B\subseteq\Real^n$ ($n\geq 2$) which contains a unit linesegment in every direction. Besicovitch showed that there exists a Besicovitch set of measure zero in $\Real^2$ (\cite{BES}, see also \cite{FALCONER} Chapter 7). It is easy to see that this gives us a Besicovitch nullset in every dimension $n\geq 2$. Knowing the existence of a Besicovitch nullset it was natural to ask if it is possible to make it even smaller.
	
	\textbf{Kakeya conjecture:} A Besicovitch set in $\Real^n$ necessarily has Hausdorff dimension $n$.
	
	This conjecture is still \textbf{open} except for $n=2$ in which case it turned out to be true (\cite{DAVIES} Davies 1971). The Kakeya conjecture is connected to several famous open questions in various fields of mathematics \cite{TAO}.
	
	Tom Körner proved that if we consider a a well-chosen closed subspace of $\Compact\left(\Real^2\right)$ in which every element contains a unit segment in every direction between $\frac{\pi}{3}$ and $\frac{2\pi}{3}$, then a typical element in this subspace is of measure zero (\cite{KÖRNER} Theorem 2.3). The union of three rotated copies of such a set is a Besicovitch set of measure zero. In this sense it is typical for a Besicovitch set to have measure zero.
	
	There is a variation of the definition of Besicovitch set:
	
	\begin{defi}\label{defi.besicovitchset}
		A \textbf{Besicovitch set} is a set $B\subseteq\Real^n$ ($n\geq 2$) which contains a line in every direction.
	\end{defi}
	
	This gives us a variation of the Kakeya conjecture which is open as well. It is conjectured to be equivalent to the previous form. We will \textbf{work with Definition \ref{defi.besicovitchset}} throughout this paper.
	
	It is clear from Fubini's theorem that if we intersect a Besicovitch nullset with lines of a fixed direction, then almost every intersection is of measure zero. We will use Baire category arguments combined with duality methods to obtain  Besicovitch sets with stronger properties.

	\section{Preliminaries}
	
	\subsection{Dual sets}
	
	We denote the orthogonal projection of the set $A\subseteq\Real^2$ in the direction $v$ by $pr_v(A)$ (where $v$ is a nonzero vector or sometimes just its angle if it leads to no confusion). Similarly $$P_v(A)\defeq\left\{\frac{x-v}{|x-v|}\in S^1:\ x\in A\setminus\{v\}\right\}$$ is the radial projection of $A$ from the point $v$. We may refer to elements of $S^1$ as angles causing no confusion.
	
	\begin{defi}\label{defi.dual}
		Let $l(a,b)$ denote the line which corresponds to the equation $y=ax+b$. We say that $\Lines$ is the \textbf{dual} of $K\subseteq\Real^2$ (or $\Lines$ is coded by $K$) if $\Lines=\{l(a,b):\ (a,b)\in K\}$.
	\end{defi}
	
	A well-known consequence of this definition is the following. For completeness we present the short proof.
	
	\begin{prop}\label{prop.verticalsection}
		Let $K\subseteq\Real^2$ be a set and $\Lines$ its dual. Then the vertical sections of $L\defeq\bigcup\Lines$ are scaled copies of the corresponding orthogonal projections of $K$. More precisely, $L_x=|(x,1)|\cdot pr_{(-1,x)}(K)$.
	\end{prop}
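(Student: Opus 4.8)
The plan is to compute the vertical section $L_x$ directly from the definition and then recognise it as a rescaled projection. First I would fix $x\in\Real$ and note that $(x,y)\in L=\bigcup\Lines$ holds precisely when $y=ax+b$ for some $(a,b)\in K$, since each line of $\Lines$ is of the form $l(a,b)$. Hence
$$L_x=\{ax+b:\ (a,b)\in K\}.$$
The entire statement then reduces to identifying this set of real numbers with $|(x,1)|\cdot pr_{(-1,x)}(K)$.

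The key observation is that the evaluation map $\phi_x\colon(a,b)\mapsto ax+b$ is the linear functional $\phi_x(a,b)=\langle(a,b),(x,1)\rangle$. Its level sets $\{ax+b=c\}$ are lines of slope $-x$ in the coding plane, i.e.\ lines in the direction $(-1,x)$; note that $(-1,x)\perp(x,1)$. Consequently two coding points are sent to the same height by $\phi_x$ exactly when they share a common line in direction $(-1,x)$, which is precisely the condition that they have equal image under the projection $pr_{(-1,x)}$ along that direction. Thus $\phi_x$ factors through $pr_{(-1,x)}$, so the image set $\{ax+b:(a,b)\in K\}$ is in natural bijection with $pr_{(-1,x)}(K)$. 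To match the scaling, write $\hat u=(x,1)/|(x,1)|$ for the unit vector spanning the orthogonal complement of the projection direction; the projection of $(a,b)$ has signed coordinate $\langle(a,b),\hat u\rangle=(ax+b)/|(x,1)|$ along $\hat u$, and multiplying by $|(x,1)|$ recovers the actual height $ax+b$. This yields $L_x=|(x,1)|\cdot pr_{(-1,x)}(K)$.

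I expect the only point requiring care to be the bookkeeping of conventions rather than any genuine difficulty: one must confirm that $pr_{(-1,x)}$ denotes projection \emph{along} the direction $(-1,x)$ onto its orthogonal complement $\Real(x,1)$, and that this complement is coordinatised by the unit vector $\hat u$, so that the factor $|(x,1)|$ is exactly the ratio between the functional value $ax+b$ and the normalised projection coordinate. Once the identification of the projection direction with the fibres of $\phi_x$ is in place, the remainder is a one-line linear-algebra computation.
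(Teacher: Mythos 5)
Your proof is correct and takes essentially the same route as the paper's: both identify $L_x=\{ax+b:\ (a,b)\in K\}$ with the set of inner products $(x,1)\cdot(a,b)$ and then factor out $|(x,1)|$ to recognise the normalised inner product as the coordinate of the orthogonal projection in the direction $(-1,x)$. Your extra care about the convention for $pr_{(-1,x)}$ (projection along $(-1,x)$ onto $\Real(x,1)$) is exactly the bookkeeping the paper leaves implicit.
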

	\begin{proof}
		The vertical section $L_x$ consists of the points of the form $ax+b$ where $(a,b)\in K$. In other words $$L_x=\{ax+b:\ (a,b)\in K\}=\{(x,1)\cdot (a,b):\ (a,b)\in K\}=\left\{|(x,1)|\cdot \frac{(x,1)}{|(x,1)|}\cdot (a,b):\ (a,b)\in K\right\}.$$ And this is exactly the orthogonal projection of $K$ in the direction $(-1,x)$ scaled by the constant $|(x,1)|$.
	\end{proof}
	
	We need to prove a generalization of the previous observation. This generalization will play a key role in the main proof.
	
	\begin{prop}\label{prop.everysection}
		Let $\Lines$ be the dual of the set $K\subseteq\Real^2$, $L\defeq\bigcup\Lines$, and let $e\notin\Lines$ be a line in $\Real^2$. Then the intersection $e\cap L$ is
		
		$(1)$ a scaled copy of an orthogonal projection of $K$ if $e$ is vertical,
		
		$(2)$ otherwise it is the image of $P_{(a_0,b_0)}(K)\setminus\left\{\frac{\pi}{2},\frac{3\pi}{2}\right\}$ by a locally Lipschitz function, where the equation of $e$ is $y=a_0x+b_0$.
	\end{prop}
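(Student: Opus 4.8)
The plan is to reduce case $(1)$ to the vertical-section computation already available, and to handle case $(2)$ by writing $e\cap L$ explicitly as the image of the radial projection $P_{(a_0,b_0)}(K)$. Case $(1)$ is immediate: if $e$ is the vertical line $x=x_0$, then $e\cap L$ is exactly the vertical section $L_{x_0}$, which by Proposition \ref{prop.verticalsection} is a scaled copy of an orthogonal projection of $K$. So I only need to treat the non-vertical case.

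For case $(2)$, suppose $e$ has equation $y=a_0x+b_0$. A point $(a,b)\in K$ contributes the dual line $y=ax+b$, and I first compute where this line meets $e$: solving $ax+b=a_0x+b_0$ gives, whenever $a\neq a_0$, the abscissa $x=\frac{b_0-b}{a-a_0}$. The key step is to rewrite this in terms of the direction from $(a_0,b_0)$ to $(a,b)$: if $\theta$ is the angle of the vector $(a-a_0,\,b-b_0)$, so that $\tan\theta=\frac{b-b_0}{a-a_0}$, then $x=-\tan\theta$. Thus the intersection point $\bigl(-\tan\theta,\ b_0-a_0\tan\theta\bigr)$ depends only on $\theta$, i.e.\ only on the image of $(a,b)$ under $P_{(a_0,b_0)}$. (This is the duality phenomenon that points of $K$ collinear with $(a_0,b_0)$ have concurrent dual lines, all meeting on $e$.) This identity is the crux of the argument; everything else is bookkeeping.

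I then define $f(\theta)\defeq\bigl(-\tan\theta,\ b_0-a_0\tan\theta\bigr)$ for $\theta\in S^1\setminus\{\frac{\pi}{2},\frac{3\pi}{2}\}$. Since $\tan$ is smooth off $\frac{\pi}{2},\frac{3\pi}{2}$, the map $f$ is $C^\infty$, hence locally Lipschitz, on its domain, and so is its restriction to any subset. It remains to verify that $f$ carries $P_{(a_0,b_0)}(K)\setminus\{\frac{\pi}{2},\frac{3\pi}{2}\}$ onto $e\cap L$. For the forward inclusion, any such $\theta$ arises from some $(a,b)\in K$ with $a\neq a_0$ (the excluded angles being precisely the vertical direction $a=a_0$), and the computation above shows that $f(\theta)$ is the point where $y=ax+b$ crosses $e$, so $f(\theta)\in e\cap L$. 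For the reverse inclusion, a point $p\in e\cap L$ lies on some dual line $y=ax+b$ with $(a,b)\in K$; here necessarily $a\neq a_0$, since if $a=a_0$ the two lines are either parallel and disjoint (so $p$ cannot lie on both) or coincide, forcing $(a,b)=(a_0,b_0)$ and hence $e\in\Lines$, contrary to hypothesis. Therefore $p=f(\theta)$ with $\theta$ the angle of $(a-a_0,\,b-b_0)\in P_{(a_0,b_0)}(K)\setminus\{\frac{\pi}{2},\frac{3\pi}{2}\}$, which completes the description.

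The main obstacle is conceptual rather than computational: recognizing that the abscissa of the intersection collapses to $-\tan\theta$, so that the entire section $e\cap L$ is governed by the single radial projection $P_{(a_0,b_0)}(K)$. Once this is in hand, the local Lipschitz property is immediate from the smoothness of $\tan$, and the only care required is to confirm that the excluded angles $\frac{\pi}{2},\frac{3\pi}{2}$ together with the hypothesis $e\notin\Lines$ account exactly for the parallel and coincident dual lines.
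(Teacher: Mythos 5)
Your proposal is correct and follows essentially the same route as the paper: solve $ax+b=a_0x+b_0$ for the abscissa, recognize it as $-\tan$ of the radial-projection angle, and lift back onto $e$ by a (locally) Lipschitz map, with the excluded angles $\frac{\pi}{2},\frac{3\pi}{2}$ accounting for lines parallel to $e$. Your treatment is somewhat more explicit than the paper's (you verify both inclusions and the case $a=a_0$ in detail, and compose the two Lipschitz maps into a single explicit $f$), but the underlying argument is identical.
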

	\begin{proof}
		(1) is just the previous proposition.
		
		(2): Note that $\Lines$ does not contain vertical lines because it is the dual of $K$. Then $$e\cap L=\{(x,y)\in\Real^2:\ \exists (a,b)\in K\quad y=a_0x+b_0=ax+b\}.$$ So in the intersection $x=\frac{b-b_0}{a_0-a}$ holds (we have $a\neq a_0$ because $e$ does not intersect lines parallel to itself). It is enough to determine the projection of $e\cap L$ to the $x$-axis since $e\cap L$ is the image of this projection by a Lipschitz function.
		
		On the other hand, the projection of $e\cap L$ to the $x$-axis is $\left\{\frac{b-b_0}{a_0-a}:\ (a,b)\in K\right\}=\left\{(-1)\cdot\frac{b-b_0}{a-a_0}:\ (a,b)\in K\right\}$, which is the set of slopes of the lines connecting points of $K$ to $(a_0,b_0)$ multiplied by $(-1)$. It is clear that this set is the image of $P_{(a_0,b_0)}(K)\setminus~\left\{\frac{\pi}{2},\frac{3\pi}{2}\right\}$ by the function $-\tan(\varphi)$ which is locally Lipschitz.
	\end{proof}
	
	We will need the following.
	
	\begin{prop}\label{prop.dualisclosed}
		The union of the dual of a compact set is closed.
	\end{prop}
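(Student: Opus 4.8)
The plan is to verify directly that $L\defeq\bigcup\Lines$ contains all of its limit points, using the compactness of $K$ as the engine. Take any sequence $(x_n,y_n)\in L$ converging to a point $(x,y)\in\Real^2$; the goal is to show $(x,y)\in L$. By definition of the dual, for each $n$ there is a coding point $(a_n,b_n)\in K$ with $y_n=a_nx_n+b_n$, i.e.\ the point $(x_n,y_n)$ lies on the line $l(a_n,b_n)$.

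The key step is to pass to the coding parameters rather than argue in the plane directly. Since $K$ is compact, the sequence $(a_n,b_n)$ is bounded, so some subsequence $(a_{n_k},b_{n_k})$ converges to a limit $(a,b)$, and closedness of $K$ gives $(a,b)\in K$. Along this subsequence $x_{n_k}\to x$, so by continuity of the map $(a,b,x)\mapsto ax+b$ we get $a_{n_k}x_{n_k}+b_{n_k}\to ax+b$; but the left-hand side equals $y_{n_k}$, which converges to $y$. Hence $y=ax+b$, so $(x,y)\in l(a,b)\subseteq L$, and $L$ is closed.

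I expect the only point requiring care to be the subsequence extraction: a limit point of $L$ need not be approached along a single fixed line of $\Lines$, so one cannot pin down a coding point in advance. Compactness of $K$ is exactly what compensates for this, by letting me extract a convergent subsequence of coding points and transfer the limit back to the plane. I note that the section description of Proposition~\ref{prop.verticalsection} does not suffice on its own here: it shows that each vertical slice $L_x$ is compact, but a set can have all sections closed and still fail to be closed, so the parametrization through $K$ is essential.
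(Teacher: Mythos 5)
Your proof is correct, and since the paper leaves this proposition as an exercise to the reader, your compactness argument (extracting a convergent subsequence of coding points $(a_{n_k},b_{n_k})\to(a,b)\in K$ and passing the identity $y_{n_k}=a_{n_k}x_{n_k}+b_{n_k}$ to the limit) is precisely the intended standard proof. Your closing remark is also a valid caution: closedness of all vertical sections alone would not suffice, so working through the parametrization by $K$ is indeed the right move.
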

	
	The proof is an easy exercise, we leave it to the reader.
	
	\subsection{Special code sets}
	
	Let $\lambda$ denote the 1-dimensional Lebesgue measure. For the main proof we need two compact sets with special properties.
	
	The following theorem is due to Michel Talagrand \cite{TALAGRAND}. For a direct proof in English, see \cite{SCIDIR} Appendix A.
	
	\begin{theorem}\label{theorem.talagrand}
		For any non-degenerate rectangle $[a,b]\times [c,d]\subseteq\Real^2$ there exists a compact set $K\subseteq[a,b]\times [c,d]$ such that its projection to the $x$-axis is the whole $[a,b]$ interval, but in every other direction its projection is of measure zero.
	\end{theorem}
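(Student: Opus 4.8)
The plan is to build $K$ inside $[a,b]\times[c,d]$ as a decreasing intersection of compact sets, after an affine reduction to the unit square. First I would observe that the desired property is invariant under affine maps $(x,y)\mapsto(\alpha x+\beta,\gamma y+\delta)$ with $\alpha,\gamma\neq 0$: such a map carries $[0,1]^2$ onto any prescribed rectangle, sends the horizontal projection to a rescaled horizontal projection, and (being diagonal-linear up to translation) turns the projection in any non-horizontal direction into a rescaled projection of $K$ in another non-horizontal direction, while preserving null sets. Hence it suffices to produce a compact $K\subseteq[0,1]^2$ whose projection to the $x$-axis is all of $[0,1]$ and whose projection in every non-horizontal direction is null.

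Next I would construct $K=\bigcap_n K_n$, where each $K_n\subseteq[0,1]^2$ is a finite union of closed axis-parallel rectangles, the $K_n$ decrease, and $pr_x(K_n)=[0,1]$ for every $n$. The point that makes the limit behave well is that the full horizontal projection survives automatically: for each fixed $x\in[0,1]$ the vertical fibres $(K_n)_x$ form a nested sequence of nonempty compact sets, so their intersection $K_x$ is nonempty, giving $pr_x(K)=[0,1]$. For the remaining directions I would impose at stage $n$ the quantitative requirement $\lambda\left(pr_\theta(K_n)\right)\le 1/n$ for every $\theta$ in the compact range $[1/n,\pi-1/n]$. Since the sets decrease, $pr_\theta(K)\subseteq pr_\theta(K_n)$ for all $n$; so for any fixed non-horizontal $\theta_0$, choosing $n_0$ with $\theta_0\in[1/n_0,\pi-1/n_0]$ gives $\lambda\left(pr_{\theta_0}(K)\right)\le 1/n$ for all $n\ge n_0$, whence that projection is null. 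Thus everything reduces to a single refinement step.

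The engine is therefore the following lemma, which I expect to be the main obstacle: given an axis-parallel rectangle $R\subseteq[0,1]^2$ and $\delta,\eta>0$, there is a finite union $R^*$ of axis-parallel subrectangles of $R$ with $pr_x(R^*)=pr_x(R)$ and $\lambda\left(pr_\theta(R^*)\right)<\eta$ for every $\theta\in[\delta,\pi-\delta]$. Granting this, I would apply it to each of the finitely many pieces of $K_{n-1}$, splitting the budget $1/n$ evenly among them, and set $K_n$ to be the union of the outputs; subadditivity of $\lambda$ controls the total oblique projection, while staying inside the original pieces keeps the sequence decreasing and inside the square, and $pr_x(R^*)=pr_x(R)$ on each piece keeps the horizontal projection full.

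The lemma itself I would attack by a Venetian-blind argument. A single rectangle can be replaced by a thin slanted strip of some slope $s$ (a staircase union of small subrectangles): its horizontal projection is still the full base, while its projection in the direction running along the strip is tiny, and it stays small in a short arc of directions around that one, of angular width governed by the strip's length and thickness. One such layer only tames a short arc, so the genuine difficulty is the iteration: refining each thin strip again for further directions so as to cover all of $[\delta,\pi-\delta]$, while guaranteeing that each refinement enlarges the already-small projections in the previously handled directions by only a controllable, summable amount. Keeping this cross-direction contamination in check through infinitely many micro-steps is the delicate heart of the matter; to finish I would fix a sufficiently fine net of target directions in $[\delta,\pi-\delta]$, tame those, and then exploit the (Lipschitz) dependence of $\theta\mapsto\lambda\left(pr_\theta(\cdot)\right)$ for a fixed finite union of rectangles to pass from the net to the whole range.
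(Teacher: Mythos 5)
A remark on comparison first: the paper offers no proof of this theorem at all; it is quoted as Talagrand's result, with a pointer to Hansen's appendix for a proof, so your proposal has to be judged on its own merits. Its architecture is correct and well organized: the affine reduction to the unit square is legitimate (a diagonal affine map fixes the horizontal and vertical direction classes, permutes the remaining projections among themselves up to scaling, and preserves null sets), the nested-intersection scheme with the fibre argument for $pr_x(K)=[0,1]$ is exactly right, the exhaustion of the non-horizontal directions by the compact arcs $[1/n,\pi-1/n]$ works, the budget-splitting reduction to your refinement lemma is sound, and the lemma you isolate is true. However, inside that lemma you locate the difficulty in the wrong place and then leave the actual difficulty open. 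The ``cross-direction contamination'' you call the delicate heart of the matter is a non-issue: by your own formulation each refinement replaces a piece by a union of subrectangles \emph{of that piece}, so every stage is a subset of all previous stages, and projections in every direction can only shrink. Previously handled directions stay handled for free; nothing needs to be ``controllable'' or ``summable''.

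The genuine gap is your final step, the passage from a finite net of directions to all of $[\delta,\pi-\delta]$. You want to fix a ``sufficiently fine'' net in advance, tame those directions, and then invoke the Lipschitz dependence of $\theta\mapsto\lambda\left(pr_\theta(E)\right)$ for the resulting finite union of rectangles. But the Lipschitz constant one can assert for a generic union of $n$ rectangles in the unit square is of order $n$ (each of the $n$ projected intervals can move at a speed comparable to the diameter of its rectangle), and $n$ is known only after the construction, whose complexity depends on how fine the net was: the quantifiers are circular. An adaptive or greedy variant fares no better, because without a lower bound on the width of the arc tamed at each step the process need not cover the compact range of directions in finitely many steps. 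What is missing is a quantitative invariant of the staircase construction itself: the relevant modulus of continuity is controlled not by $n$ but by $\sum_i \mathrm{diam}(R_i)$, essentially the total length of the staircase spines, and this quantity stays bounded by a constant of order $1/\sin\delta$ under all refinements, because a refinement replaces pieces by staircases whose total horizontal extent (counted with multiplicity) equals that of the pieces, hence stays $\approx 1$ throughout. Once this is proved, taming one direction automatically tames an arc of width comparable to $\eta\sin\delta$ around it, so a net of mesh proportional to $\eta\sin\delta$ can indeed be fixed in advance and the iteration terminates after roughly $1/(\eta\sin\delta)$ stages. With that one estimate supplied, your proof goes through; without it, the key lemma remains unproved.
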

	
	\begin{defi}\label{defi.invisibility}
		A set $A\subseteq\Real^2$ is \textbf{invisible} from a point $a\in\Real^2$ if $\lambda(P_a(A))=0$.
	\end{defi}
	
	We will use a theorem of Károly Simon and Boris Solomyak \cite{SIMSOL}:
	
	\begin{theorem}\label{theorem.simonsolomyak}
		Let $\Lambda$ be a self-similar set of Hausdorff dimension 1 in $\Real^2$ satisfying the Open Set Condition, which is not on a line. Then, $\Lambda$ is invisible from every $a\in\Real^2$.
	\end{theorem}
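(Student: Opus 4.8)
The plan is to reduce the statement to the classical Besicovitch--Federer projection theorem and then to upgrade the resulting \emph{almost every direction} conclusion to the single fixed point $a$ by exploiting self-similarity. Write $\Lambda=\bigcup_{i=1}^m S_i(\Lambda)$ with contracting similarities $S_i(x)=r_iO_ix+t_i$, where $O_i\in O(2)$ and $0<r_i<1$. Since the Open Set Condition holds and $\dim_H\Lambda=1$, the similarity dimension equals $1$, so $\sum_i r_i=1$ and $0<\mathcal H^1(\Lambda)<\infty$. First I would record that $\Lambda$ is purely $1$-unrectifiable: if a piece of $\Lambda$ of positive $\mathcal H^1$-measure were rectifiable it would possess an approximate tangent line at almost all of its points, and pushing such a tangent through the maps $S_i$ propagates it across all scales and forces $\Lambda$ into a line, contradicting the hypothesis. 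By the Besicovitch--Federer projection theorem a purely unrectifiable set of finite $\mathcal H^1$-measure satisfies $\lambda\big(pr_\theta(\Lambda)\big)=0$ for almost every direction $\theta$. The difficulty is that the lines through the \emph{single} point $a$ form a measure-zero family among all lines, so this almost-everywhere statement says nothing directly about $P_a(\Lambda)$.

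The heart of the argument is a self-similar renormalisation. Writing $V(a)\defeq\lambda\big(P_a(\Lambda)\big)$ and using $S_i(y)-S_i(b)=r_iO_i(y-b)$ with $b=S_i^{-1}(a)$, one obtains $P_a\big(S_i(\Lambda)\big)=O_i\big(P_{S_i^{-1}(a)}(\Lambda)\big)$; since $O_i$ preserves arclength and $\Lambda=\bigcup_iS_i(\Lambda)$, this yields the sub-invariance $V(a)\le\sum_{i=1}^m V\big(S_i^{-1}(a)\big)$. Iterating over words $\mathbf i$ of length $n$ gives $V(a)\le\sum_{|\mathbf i|=n}V\big(S_{\mathbf i}^{-1}(a)\big)$, where the maps $S_{\mathbf i}^{-1}$ expand distances by $r_{\mathbf i}^{-1}$ and push $a$ off to infinity. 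The trivial angular bound $V(b)\le C\,\mathrm{diam}(\Lambda)/\mathrm{dist}(b,\Lambda)$ together with $\mathrm{dist}\big(S_{\mathbf i}^{-1}(a),\Lambda\big)\ge r_{\mathbf i}^{-1}\,\mathrm{dist}(a,\Lambda)$ yields only $V(a)\le C'\sum_{|\mathbf i|=n}r_{\mathbf i}=C'\big(\sum_ir_i\big)^n=C'$, because $\sum_ir_i=1$. So at the critical dimension the naive estimate does not decay, and this is the main obstacle.

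To beat the critical exponent I would use unrectifiability to show that $V$ decays \emph{faster} than the angular bound at infinity. As $b\to\infty$ in a direction $\psi$, the rescaled radial projection $\mathrm{dist}(b,\Lambda)\cdot P_b(\Lambda)$ converges to the orthogonal projection $pr_{\psi^\perp}(\Lambda)$, so $\mathrm{dist}(b,\Lambda)\,V(b)\to\lambda\big(pr_{\psi^\perp}(\Lambda)\big)$, which vanishes for almost every $\psi$ by the previous paragraph. Feeding this improved, $o(r_{\mathbf i})$ estimate back into the iterated sum should force $V(a)=0$, provided the escape directions of the points $S_{\mathbf i}^{-1}(a)$ equidistribute away from the exceptional null set of directions. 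When some $O_i$ is an irrational rotation this equidistribution is essentially automatic; the subtle cases are when all $S_i$ are homotheties, where one must argue separately that the finitely many exceptional directions cannot be hit persistently along the tree, and the case $a\in\Lambda$, where $\mathrm{dist}(a,\Lambda)=0$ and one first removes a small neighbourhood of $a$ and covers the remainder by sub-copies bounded away from $a$. Controlling the interaction between the exceptional direction set of Besicovitch--Federer and the self-similar distribution of escape directions is, I expect, the crux of the entire proof.
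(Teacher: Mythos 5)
First, a point of comparison: the paper does not prove Theorem \ref{theorem.simonsolomyak} at all --- it is imported as a black box from Simon and Solomyak \cite{SIMSOL} --- so your attempt can only be judged against what a complete argument would require, and it falls short of that. The parts that work: your first two paragraphs are correct. Pure unrectifiability of $\Lambda$, the Besicovitch--Federer theorem giving $\lambda(pr_\theta(\Lambda))=0$ for Lebesgue-a.e.\ $\theta$, the exact renormalisation $P_a(S_i(\Lambda))=O_i\bigl(P_{S_i^{-1}(a)}(\Lambda)\bigr)$ with its consequence $V(a)\le\sum_i V\bigl(S_i^{-1}(a)\bigr)$, and the computation that the naive angular bound returns the non-decaying estimate $C'\bigl(\sum_i r_i\bigr)^n=C'$ at the critical dimension: all of this is right and correctly locates the difficulty.

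The gap is that the mechanism of your third paragraph provably cannot run in exactly the case this paper needs. Suppose all $S_i$ are homotheties ($O_i=\mathrm{id}$), as for the four-corner Cantor set to which the theorem is applied in Corollary \ref{cor.fourcornerinvisible}. Normalising so that the fixed point of $S_1$ is the origin, we have $S_{\mathbf i}^{-1}(a)=r_{\mathbf i}^{-1}(a-t_{\mathbf i})$ with $t_{\mathbf i}=S_{\mathbf i}(0)\in\Lambda$, so the escape direction of $S_{\mathbf i}^{-1}(a)$ is precisely the direction from the point $t_{\mathbf i}\in\Lambda$ to $a$. Consequently the weighted empirical measures $\sum_{|\mathbf i|=n}r_{\mathbf i}\,\delta_{\theta_{\mathbf i}}$ converge weakly to the push-forward of the natural self-similar measure under $x\mapsto\frac{a-x}{|a-x|}$, a measure supported on the antipodal copy $-P_a(\Lambda)$ of the very set you are trying to prove null. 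So the escape directions can never equidistribute with respect to Lebesgue measure (that would force $\lambda(P_a(\Lambda))>0$, negating the conclusion), and if the theorem is true they concentrate on a Lebesgue-null set, about which the almost-everywhere information from Besicovitch--Federer says nothing: the argument is circular precisely in the hard case. Your fallback for this case rests on the premise that only \emph{finitely many} exceptional directions must be avoided, and that premise is false: Besicovitch--Federer yields only a null exceptional set, and for the four-corner Cantor set the directions of positive-measure projection form an infinite, dense set of rational slopes (any slope $p/q$ for which $\{0,p,q,p+q\}$ is a complete residue system mod $4$ works, by the standard self-affine tiling argument). There is also an untreated uniformity problem: $V\bigl(S_{\mathbf i}^{-1}(a)\bigr)=o(r_{\mathbf i})$ must be summed over exponentially many words of length $n$, while the decay $\mathrm{dist}(b,\Lambda)\cdot V(b)\to\lambda\bigl(pr_{\psi^\perp}(\Lambda)\bigr)$ holds only as an upper-semicontinuity estimate (compare Lemma \ref{lemma.uppersemicont1}), non-uniformly in $\psi$. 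Your dense-rotation case is plausible (a random-walk equidistribution on $S^1$ plus upper semicontinuity of $\theta\mapsto\lambda(pr_\theta(\Lambda))$ can be made to work there), but the step you yourself flag as ``the crux'' is not a technical remainder --- in the finite-rotation/homothety case it is the entire theorem, and it is exactly what the cited proof in \cite{SIMSOL} exists to overcome.
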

	
	It is an easy exercise to check that the four corner Cantor set of contraction ratio $\frac{1}{4}$ projects orthogonally to an interval in four different directions. It is well-known that this set satisfies the conditions of Theorem \ref{theorem.simonsolomyak}. Rotate it to have an interval as projection to the $x$-axis. Now by an affine transformation we can make it fit to the rectangle $[a,b]\times[c,d]$ while not losing its properties required by Theorem \ref{theorem.simonsolomyak}. By these easy observations we get the following corollary.
	
	\begin{cor}\label{cor.fourcornerinvisible}
		For any non-degenerate rectangle $[a,b]\times [c,d]\subseteq\Real^2$ there exists a compact set $K\subseteq[a,b]\times [c,d]$ such that its projection to the $x$-axis is the whole $[a,b]$ interval, but it is invisible from every point of the plane.
	\end{cor}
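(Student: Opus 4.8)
The plan is to reduce Corollary \ref{cor.fourcornerinvisible} to Theorem \ref{theorem.simonsolomyak} by exhibiting a single concrete self-similar set and then pushing it into the prescribed rectangle by an affine map. First I would fix the \textbf{four corner Cantor set} $C$ with contraction ratio $\frac14$: starting from the unit square, keep the four corner subsquares of side $\frac14$ and iterate. This is the attractor of the four maps $x\mapsto \frac14 x + t_i$ where $t_i$ ranges over the four corners, so it is self-similar; the four open corner squares are disjoint, giving the Open Set Condition (take the open unit square as the feasible set). Its Hausdorff dimension is $\log 4/\log 4 = 1$, and it is plainly not contained in any line since it contains all four corners of the square. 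Thus $C$ satisfies every hypothesis of Theorem \ref{theorem.simonsolomyak} and is therefore invisible from every point of the plane.

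Next I would record the elementary fact, flagged as an exercise in the excerpt, that $C$ projects orthogonally onto a full interval in (at least) four directions — the relevant one being a direction in which the projection is a genuine interval rather than a Cantor-like set. One checks this by observing that in the appropriate diagonal direction the self-similar structure tiles an interval without gaps (the contraction ratio $\frac14$ together with four pieces makes the projected copies abut rather than leave holes). I would then \textbf{rotate} $C$ so that this full-interval projection direction becomes the $x$-axis; call the rotated set $C'$. Rotation is an isometry, so $C'$ is still self-similar, still satisfies the Open Set Condition, is still one-dimensional and not on a line, and its orthogonal projection to the $x$-axis is now a nondegenerate interval.

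Finally I would apply the affine map $T(x,y) = (\alpha x + \beta,\ \gamma y + \delta)$ with positive $\alpha,\gamma$ chosen so that $T$ carries the bounding box of $C'$ exactly onto $[a,b]\times[c,d]$ and so that the $x$-projection of $T(C')$ is all of $[a,b]$. Set $K \defeq T(C')$. The key point is that the three properties needed for invisibility are preserved by any invertible affine map: $T$ conjugates the generating similarities of $C'$ into contractions that generate $T(C')$ as a self-similar set (with the same OSC witness, namely the $T$-image of the old open set), the Hausdorff dimension is unchanged because affine maps are bi-Lipschitz, and the set still fails to lie on a line since $T$ maps lines to lines bijectively. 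Hence Theorem \ref{theorem.simonsolomyak} applies to $K$ directly, giving $\lambda(P_a(K)) = 0$ for every $a\in\Real^2$, which is exactly invisibility from every point.

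The step I expect to demand the most care is the \emph{preservation of self-similarity and the Open Set Condition under an axis-scaling affine map}, because $T$ here has different scale factors $\alpha \neq \gamma$ in the two coordinates and so does not commute with rotations; one must verify that the conjugated maps $T \circ f_i \circ T^{-1}$ are still contracting similarities (not merely affine contractions) in the plane. The clean way around this is to insist that the affine fitting be done by a \emph{similarity} followed by nothing else — i.e. choose a single uniform scale and translation, accepting that the image rectangle then has the aspect ratio of $C'$'s bounding box — or, if a general rectangle is truly required, to note that Theorem \ref{theorem.simonsolomyak} only needs the weaker features (dimension one, OSC, not on a line) which \emph{are} affine-invariant even under anisotropic scaling, so invisibility survives even though exact similarity does not. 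I would adopt the latter observation, since the hypotheses of Theorem \ref{theorem.simonsolomyak} as quoted are the only things that must be checked, and each of them is preserved by an arbitrary invertible affine transformation.
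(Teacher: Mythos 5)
Your construction follows the paper's own route exactly: the four-corner Cantor set of ratio $\frac14$, a rotation making the full-interval projection direction the vertical one, an affine map onto the target rectangle, and then Theorem \ref{theorem.simonsolomyak}. The problem is in your final paragraph, where you handle the one genuinely delicate step. Your resolution — that Theorem \ref{theorem.simonsolomyak} ``only needs dimension one, OSC, not on a line,'' and that ``each of them is preserved by an arbitrary invertible affine transformation'' — is false as stated. Self-similarity is itself a hypothesis of the theorem, and the Open Set Condition is a condition on a generating system of \emph{similarities}, so it is meaningless for the image set until you exhibit such a system; in general the affine image of a self-similar set is only self-affine (conjugating a similarity that has a nontrivial rotation part by an anisotropic diagonal map yields an affine contraction that is not a similarity, exactly the failure you yourself flagged). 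So the step, as you justify it, does not go through.

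The gap closes easily, in either of two ways. (a) The four-corner Cantor set, and any rotated copy of it, is the attractor of four \emph{homotheties} $v\mapsto \frac14 v+c_i$, with no rotational part; conjugating such a map by an arbitrary invertible affine map $v\mapsto Mv+b$ gives $u\mapsto \frac14 u + \left(Mc_i+\frac34 b\right)$, again a homothety of ratio $\frac14$. Hence $T(C')$ \emph{is} genuinely self-similar, with the OSC witnessed by the $T$-image of the old open set, of dimension $1$, and not on a line, so the theorem applies to $K=T(C')$ directly. (b) Alternatively, invisibility from every point is itself affine-invariant, so it suffices to apply the theorem to $C'$ before mapping: writing $T(v)=Mv+b$, one has
\[
P_{T(a)}\bigl(T(A)\bigr)=\left\{\frac{Mu}{|Mu|}:\ u\in P_a(A)\right\},
\]
and $u\mapsto Mu/|Mu|$ is a diffeomorphism of $S^1$, hence bi-Lipschitz on the compact circle, so it preserves sets of measure zero; since $T$ is a bijection of the plane, $T(C')$ is invisible from every point because $C'$ is. Either patch completes your argument, which otherwise coincides with the paper's (the paper itself compresses this point into the phrase ``while not losing its properties required by Theorem \ref{theorem.simonsolomyak}'', with observation (a) being the implicit justification).
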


	\subsection{Projections of a compact set}
	
	We will need the following two lemmas.
	
	\begin{lemma}\label{lemma.uppersemicont1}
		Let $A$ be a compact set and $f_A: S^1\to\Real$, $f_A(\varphi)=\lambda(pr_\varphi(A))$. Then $f_A$ is upper semicontinuous.
	\end{lemma}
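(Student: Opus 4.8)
The plan is to fix a direction $\varphi_0\in S^1$ and verify the defining inequality of upper semicontinuity directly: for every $\varepsilon>0$ I must produce a neighborhood of $\varphi_0$ on which $f_A<f_A(\varphi_0)+\varepsilon$. First I would record that $f_A$ is well defined and finite, since $pr_\varphi$ is continuous and hence $pr_\varphi(A)$ is compact (so measurable), with measure bounded by the diameter of $A$. It is convenient to coordinatize the projections by writing $\pi_\varphi(x)=\langle x,u_\varphi\rangle$ with $u_\varphi=(\cos\varphi,\sin\varphi)$, so that $pr_\varphi(A)$ is identified with $\pi_\varphi(A)\subseteq\Real$, $\lambda(pr_\varphi(A))=\lambda(\pi_\varphi(A))$, and each $\pi_\varphi$ is linear and $1$-Lipschitz. (The case $A=\emptyset$ is trivial, so I assume $A\neq\emptyset$.)

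The heart of the argument is to replace the set $pr_{\varphi_0}(A)$, whose measure behaves badly under perturbation, by an open superset whose measure I can control. By outer regularity of $\lambda$ I would choose an open set $U\subseteq\Real$ with $pr_{\varphi_0}(A)\subseteq U$ and $\lambda(U)<f_A(\varphi_0)+\varepsilon$. Since $pr_{\varphi_0}(A)$ is compact and $\Real\setminus U$ is closed and disjoint from it — and nonempty, because $\lambda(U)$ is finite so $U\neq\Real$ — their distance $\delta_0>0$ is strictly positive. Consequently the $\delta_0$-neighborhood of $pr_{\varphi_0}(A)$ is still contained in $U$.

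It then remains to show that $pr_\varphi(A)$ falls inside this neighborhood once $\varphi$ is close to $\varphi_0$. Here I would use that $\varphi\mapsto\pi_\varphi$ is uniformly continuous on the bounded set $A$: for $x\in A$ one has $|\pi_\varphi(x)-\pi_{\varphi_0}(x)|=|\langle x,u_\varphi-u_{\varphi_0}\rangle|\leq R\,|u_\varphi-u_{\varphi_0}|$, where $R=\sup_{x\in A}|x|<\infty$. Thus, for $\varphi$ close enough to $\varphi_0$ that $R\,|u_\varphi-u_{\varphi_0}|<\delta_0$, every point of $pr_\varphi(A)$ lies within $\delta_0$ of a point of $pr_{\varphi_0}(A)$, so $pr_\varphi(A)\subseteq U$. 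Hence $f_A(\varphi)=\lambda(pr_\varphi(A))\leq\lambda(U)<f_A(\varphi_0)+\varepsilon$, which is precisely the required bound.

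The only genuinely delicate point is the very first move of the second paragraph. Lebesgue measure is not continuous under small perturbations of a set, so one cannot argue directly that $\lambda(pr_\varphi(A))\to\lambda(pr_{\varphi_0}(A))$; the limit can drop, which is exactly why one obtains upper, and not two-sided, semicontinuity. Passing through the open superset $U$ and its outer-regular estimate is what converts the soft fact that the projections are uniformly close as sets into the one-sided measure bound, and everything else is routine.
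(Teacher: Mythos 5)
Your proof is correct, but it handles the decisive step by a genuinely different mechanism than the paper. Both arguments share the same opening move: approximate from outside by an open set of measure less than $f_A(\varphi_0)+\varepsilon$ (the paper takes a finite union of open intervals covering $pr_{\varphi_0}(A)$, extracted by compactness; you take a general open $U$ from outer regularity). The paper then lifts this cover back to the plane: it covers $A$ by finitely many rectangles whose $\varphi_0$-projections are those intervals, and rests on the asserted-as-clear (but not proved) geometric fact that the measure of the projection of a finite union of rectangles stays below $c$ under a small rotation of the direction. You instead never leave the line: compactness gives a positive distance $\delta_0$ from $pr_{\varphi_0}(A)$ to $\Real\setminus U$, and the Cauchy--Schwarz estimate $|\langle x,u_\varphi-u_{\varphi_0}\rangle|\leq R\,|u_\varphi-u_{\varphi_0}|$ over the bounded set $A$ forces all of $pr_\varphi(A)$ into $U$ once $\varphi$ is close to $\varphi_0$. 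What each buys: your version is fully quantitative and self-contained --- it replaces the paper's informal rectangle step with a one-line Lipschitz bound and even produces an explicit admissible neighborhood (chord distance below $\delta_0/R$); the paper's rectangle covering, by contrast, is built as a template that transfers verbatim to Lemma \ref{lemma.uppersemicont2}, where rectangles become sectors of an annulus and the same covering argument is repeated for radial projections. It is worth noting that your distance-plus-Lipschitz device is not foreign to the paper either: essentially the same trick (a Lipschitz bound for the radial projection restricted away from the center, combined with a small neighborhood of a null compact set) appears inside the proof of Theorem \ref{theorem.typical2}, so your approach would also adapt to the radial setting.
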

	
	Talagrand proved in \cite{TALAGRAND} that $\left\{f_A:\ A\in \Compact(\Real^2)\right\}$ is the set of non-negative upper semicontinuous functions. We need only the easy direction, hence we present a proof only for that.
	
	\begin{proof}
		Let $c\in\Real$ be arbitrary. We have to verify that $f_A^{-1}((-\infty,c))$ is open. Let $\varphi$ be such that $\lambda(pr_\varphi(A))<c$. Since $pr_\varphi(A)$ is compact as well, it can be covered by finitely many open intervals $I_j$ ($1\leq j\leq l$) for which $\lambda\left(\bigcup_{j=1}^l I_j\right)<c$ holds. This cover shows that $A$ can be covered by rectangles $R_1,\dots,R_l$ whose projections in the direction $\varphi$ are the intervals $I_1,\dots,I_l$. But for the union of finitely many rectangles it is clear that changing $\varphi$ by a suitably small ($<\delta$) angle we can keep the measure of its projection less than $c$. This implies that for any $\varphi'\in (\varphi-\delta,\varphi+\delta)$ we have $$\lambda(pr_{\varphi'}(A))\leq \lambda\left(pr_{\varphi'}\left(\bigcup_{j=1}^l R_j\right)\right)<c.$$ In other words, a neighbourhood of $\varphi$ also lies in $f_A^{-1}((-\infty,c))$, therefore the preimage is open.
	\end{proof}

	\begin{lemma}\label{lemma.uppersemicont2}
		If $A\subseteq\Real^2$ is compact, then $F_A:\Real^2\setminus A\to \Real$, $F_A(v)=\lambda(P_v(A))$ is upper semicontinuous.
	\end{lemma}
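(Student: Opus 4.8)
The plan is to mirror the proof of Lemma \ref{lemma.uppersemicont1}, replacing orthogonal projections by radial ones and handling the extra subtlety that the centre of projection moves. Fix $v_0\in\Real^2\setminus A$ and a real number $c>F_A(v_0)=\lambda(P_{v_0}(A))$; it suffices to produce a neighbourhood of $v_0$ on which $F_A<c$, i.e.\ to show that $F_A^{-1}((-\infty,c))$ is open. Since $A$ is compact and $v_0\notin A$, the distance $d\defeq\mathrm{dist}(v_0,A)$ is strictly positive, and the map $x\mapsto\frac{x-v_0}{|x-v_0|}$ is continuous on $A$, so $P_{v_0}(A)$ is compact. I would cover it by finitely many open arcs $I_1,\dots,I_l\subseteq S^1$ with $\sum_{j}\lambda(I_j)<c$. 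Because $P_{v_0}(A)$ is a compact subset of the open set $\bigcup_j I_j$, there is an $\varepsilon>0$ such that the $\varepsilon$-neighbourhood of $P_{v_0}(A)$ in $S^1$ is still contained in $\bigcup_j I_j$.

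The heart of the argument — and the step I expect to be the main obstacle — is a geometric estimate showing that moving the centre slightly perturbs every radial direction by only a small angle. Concretely, for $v$ with $|v-v_0|<\delta\le d/2$ and any $x\in A$, consider the triangle with vertices $v$, $v_0$, $x$: the angle at $x$ is exactly the angular difference between $\frac{x-v}{|x-v|}$ and $\frac{x-v_0}{|x-v_0|}$, and by the law of sines its sine is at most $\frac{|v-v_0|}{|x-v|}\le\frac{2\delta}{d}$, using that $|x-v|\ge d-\delta\ge d/2$. Hence every direction in $P_v(A)$ lies within angle $\arcsin(2\delta/d)$ of a direction in $P_{v_0}(A)$ (namely the direction from $v_0$ to the same $x$), so $P_v(A)$ is contained in the $\arcsin(2\delta/d)$-neighbourhood of $P_{v_0}(A)$.

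Finally, I would choose $\delta>0$ small enough that $\arcsin(2\delta/d)\le\varepsilon$ (and $\delta\le d/2$). Then for every $v$ in the ball of radius $\delta$ about $v_0$ we have $P_v(A)\subseteq\bigcup_j I_j$, whence $\lambda(P_v(A))\le\sum_j\lambda(I_j)<c$. Thus this ball lies in $F_A^{-1}((-\infty,c))$, which proves the preimage is open and $F_A$ is upper semicontinuous. The only point needing care beyond the triangle estimate is uniformity of the distance bound over the ball, but this is automatic: $|v-v_0|\le d/2$ already forces $\mathrm{dist}(v,A)\ge d/2$, so the estimate holds simultaneously for all $v$ in the chosen neighbourhood.
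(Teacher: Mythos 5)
Your argument is correct, and although you set out to ``mirror'' the paper's strategy, the step you identify as the heart of the matter is handled quite differently from the paper. Both proofs start the same way: cover the compact set $P_{v_0}(A)$ by finitely many open arcs $I_1,\dots,I_l$ of total measure $<c$, then show that moving the centre slightly keeps the radial projection inside a set of measure $<c$. The paper does this by pulling the arcs back to a cover of $A$ by finitely many sectors of an annulus and then asserting, as geometrically clear (without any estimate), that the measure of the radial projection of this finite union of sectors stays below $c$ when the centre moves slightly. You bypass the sectors entirely: your law-of-sines bound $\sin\alpha_x\le |v-v_0|/|x-v|\le 2\delta/d$ shows directly that $P_v(A)$ lies in the $\arcsin(2\delta/d)$-neighbourhood of $P_{v_0}(A)$, and the compact-inside-open observation (a positive distance $\varepsilon$ from $P_{v_0}(A)$ to the complement of $\bigcup_j I_j$) turns this into $P_v(A)\subseteq\bigcup_j I_j$. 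What your route buys is precision: it supplies the quantitative stability estimate that the paper leaves informal, and it needs no auxiliary cover of $A$ itself; what the paper's route buys is a closer structural parallel with the proof of Lemma \ref{lemma.uppersemicont1} (sectors of an annulus playing the role of rectangles).

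One small patch is needed. From $\sin\alpha_x\le 2\delta/d$ alone you may only conclude $\alpha_x\le\arcsin(2\delta/d)$ \emph{or} $\alpha_x\ge\pi-\arcsin(2\delta/d)$, since sine is not injective on $[0,\pi]$; you must rule out the obtuse case. This is immediate from your own inequalities: $|v-v_0|<\delta\le d/2\le |x-v|$, so the side opposite $\alpha_x$ is strictly shorter than the side opposite the angle at $v_0$, giving $\alpha_x<\alpha_{v_0}$; two angles of a triangle cannot both be $\ge\pi/2$, so $\alpha_x<\pi/2$ and the $\arcsin$ bound stands. (In the degenerate collinear case $\alpha_x=0$, because $x$ cannot lie between $v$ and $v_0$: that would force $|x-v_0|<\delta<d$.) With this remark inserted, the proof is complete.
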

	\begin{proof}
		Let $c\in\Real$. We will check that $F_A^{-1}((-\infty,c))$ is open. Let $v$ be a point such that $F_A(v)=\lambda(P_v(A))<c$. Then by compactness we can take a finite cover of $P_v(A)$ by open arcs $I_1,\dots,I_l$ such that $\lambda\left(\bigcup_{j=1}^l I_j\right)<c$. This cover shows that $A$ can be covered by $l$ sectors $R_1,\dots,R_l$ of an annulus such that their radial projections from $v$ are $I_1,\dots,I_l$. For the union of finitely many sectors of an annulus and a point which has a positive distance from them it is clear that moving $v$ by a suitably small distance we can keep the measure of the radial projection of $\bigcup_{j=1}^l R_j$ less than $c$. In other words, a neighbourhood of $v$ lies in $F_A^{-1}((-\infty,c))$, so it is open.
	\end{proof}

	\subsection{Baire category and Hausdorff distance}
	
	For the sake of clarity we assert some well-known definitions and theorems here.

	\begin{defi}\label{defi.firstcategory}
		Let $X$ be a topological space and $E\subseteq X$.
		\begin{itemize}
			\item $E$ is \textbf{nowhere dense} in $X$ if its closure has empty interior.
			\item $E$ is of \textbf{first category} in $X$ if it is the countable union of nowhere dense sets.
			\item $E$ is of \textbf{second} category in $X$ if it is not of first category.
			\item $E$ is \textbf{residual} in $X$ if its complement is of first category.
		\end{itemize}
	\end{defi}

	\begin{theorem}\label{theorem.baire}
		\textbf{(Baire category theorem)} A complete metric space is of second category in itself.
	\end{theorem}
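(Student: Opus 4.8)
The plan is to argue by contradiction, in the equivalent formulation about dense open sets. Suppose (with $X$ nonempty, the statement being otherwise vacuous) that $X=\bigcup_{n=1}^\infty E_n$ with every $E_n$ nowhere dense. Since $E_n\subseteq\cl{E_n}$ and $\cl{E_n}$ also has empty interior, I may replace each $E_n$ by its closure and assume the $E_n$ are closed with empty interior; we still have $X=\bigcup_n E_n$. Passing to complements, $U_n\defeq X\setminus E_n$ is open, and it is dense precisely because $E_n$ has empty interior. It therefore suffices to produce a point in $\bigcap_{n=1}^\infty U_n$, since such a point lies in no $E_n$ and contradicts $X=\bigcup_n E_n$.

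The heart of the proof is a nested-ball construction driven by density and openness. Using that $U_1$ is dense and open, I would first pick a center $x_1$ and a radius $r_1<1$ with $\cl{B}(x_1,r_1)\subseteq U_1$. Inductively, given the closed ball $\cl{B}(x_n,r_n)$, the open ball $B(x_n,r_n)$ is nonempty, so by density it meets the open set $U_{n+1}$; choosing a point in this intersection and shrinking, I obtain $x_{n+1}$ and $r_{n+1}<2^{-n}$ with $\cl{B}(x_{n+1},r_{n+1})\subseteq B(x_n,r_n)\cap U_{n+1}$. The key demand at each step is to keep the new closed ball inside both the previous open ball and the next dense open set while forcing the radii to zero.

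Finally I would invoke completeness to close the argument. For $m>n$ the chain of inclusions gives $x_m\in B(x_n,r_n)$, so $d(x_n,x_m)<r_n\to 0$ and $(x_n)_{n}$ is Cauchy; let $x$ be its limit. For each fixed $n$, all terms $x_m$ with $m\geq n$ lie in the closed set $\cl{B}(x_n,r_n)$, hence so does the limit, giving $x\in\cl{B}(x_n,r_n)\subseteq U_n$ for every $n$. Thus $x\in\bigcap_n U_n$, the desired contradiction.

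I expect the main obstacle to be the inductive step rather than the (single) appeal to completeness: one must check that at every stage the radius can be chosen strictly positive yet small enough to force convergence, and that the closed ball genuinely sits inside the previous open ball, which is what keeps the eventual limit trapped in each $U_n$. Completeness enters only once, at the very end, but it is exactly the hypothesis that prevents the limit from escaping the intersection.
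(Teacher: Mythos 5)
Your proof is correct: it is the standard nested-closed-balls argument (reduce to dense open sets, inductively trap closed balls of shrinking radius inside each successive dense open set and the previous open ball, then use completeness to extract a limit point lying in every $U_n$), and all the delicate points --- strictly positive radii, the closed ball sitting inside the previous open ball, the limit being captured by each closed ball --- are handled properly. Note that the paper states Theorem \ref{theorem.baire} without proof, as well-known background material, so there is no proof in the paper to compare against; your argument is the canonical textbook one and needs no changes.
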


	\begin{defi}\label{defi.typical}
		Let $X$ be a complete metric space. The property $P(x)$ is \textbf{typical} in $X$ if $\{x\in X:\ P(x)\}$ is residual in $X$. We often formulate this in a less accurate manner: a typical $x\in X$ has the property $P(x)$.
	\end{defi}
	
	Let $(X,d)$ be a metric space and let $\Compact(X)$ be the set of its compact subsets. Denote the open $\delta$-neighbourhood of $A$ by $A_\delta$, and denote the closed $\delta$-neighbourhood of $A$ by $\overline{A_\delta}$. 
	
	\begin{defi}\label{defi.hausdorffdistance}
		Let $K,L\in\Compact(X)$. The \textbf{Hausdorff distance} of $K$ and $L$ is $$d_H(K,L)\defeq\max\{\inf\{\delta_1\geq 0:\ K\subseteq L_{\delta_1}\},\inf\{\delta_2\geq 0:\ L\subseteq K_{\delta_2}\}\}.$$
	\end{defi}

	\begin{theorem}\label{theorem.complete}
		If $(X,d)$ is a complete metric space, then $(\Compact(X),d_H)$ is a complete metric space as well.
	\end{theorem}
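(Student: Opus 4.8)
The plan is to take an arbitrary Cauchy sequence $(K_n)_{n\in\Nat}$ in $(\Compact(X),d_H)$ (as usual one takes $\Compact(X)$ to consist of nonempty compact sets, so that $d_H$ is finite) and to exhibit a compact limit. Since a Cauchy sequence converges as soon as one of its subsequences does, I would first pass to a subsequence satisfying $d_H(K_n,K_{n+1})<2^{-n}$, which will later let me build explicit Cauchy selections inside $X$. The natural candidate for the limit is
$$K\defeq\{x\in X:\ \lim_{n\to\infty}d(x,K_n)=0\}.$$
The first routine point is that this limit always exists: the map $A\mapsto d(x,A)$ is $1$-Lipschitz with respect to $d_H$, so $\bigl(d(x,K_n)\bigr)_n$ is a Cauchy sequence of reals for every fixed $x$; moreover $x\mapsto\lim_n d(x,K_n)$ is itself $1$-Lipschitz, being a pointwise limit of $1$-Lipschitz functions, so $K$ is the zero set of a continuous function and hence closed.

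Next I would show that $K$ is nonempty and, more strongly, that every $K_N$ is well approximated by points of $K$. Given $y\in K_N$, the bound $d_H(K_n,K_{n+1})<2^{-n}$ lets me inductively choose $y_n\in K_n$ for $n\geq N$ with $y_N=y$ and $d(y_n,y_{n+1})<2^{-n}$. Then $(y_n)_{n\geq N}$ is Cauchy in $X$, so by completeness of $X$ it converges to some $x$, and one checks $x\in K$ with $d(y,x)\leq\sum_{n\geq N}2^{-n}=2^{1-N}$. This simultaneously proves $K\neq\emptyset$ and yields the inclusion $K_N\subseteq\cl{K_{2^{1-N}}}$.

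For compactness, I note that $K$ is a closed subset of the complete space $X$, hence complete, so it remains only to verify total boundedness. Given $\varepsilon>0$, fix $N$ with $d_H(K_n,K_N)<\varepsilon$ for all $n\geq N$. The compact set $K_N$ has a finite $\varepsilon$-net $F$, and I claim $F$ is a $3\varepsilon$-net for $K$: any $x\in K$ satisfies $d(x,K_n)<\varepsilon$ for some $n\geq N$, hence $d(x,K_N)<2\varepsilon$ by the triangle inequality, hence $x$ lies within $3\varepsilon$ of $F$. Thus $K$ is totally bounded and complete, therefore compact.

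Finally I would verify $d_H(K_N,K)\to 0$. The inclusion $K\subseteq\cl{(K_N)_{2\varepsilon}}$ for $N$ large is immediate from the definition of $K$ combined with $d_H(K_n,K_N)<\varepsilon$, while the reverse inclusion $K_N\subseteq\cl{K_{2^{1-N}}}$ is exactly the selection estimate from the second paragraph; since both neighbourhood radii tend to $0$, the subsequence converges to $K$, and therefore so does the original Cauchy sequence. I expect the main obstacle to be this reverse inclusion: it is the only step where the completeness of $X$ is genuinely needed, and it forces the careful Cauchy-selection construction rather than a soft compactness argument. Everything else reduces to the Lipschitz behaviour of $x\mapsto d(x,A)$ and repeated use of the triangle inequality.
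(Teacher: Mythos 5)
The paper states this theorem as well-known background and gives no proof of its own, so there is nothing to compare against; judged on its own terms, your proof is correct and is the standard argument (limit candidate $K=\{x:\lim_n d(x,K_n)=0\}$, Cauchy selections via a fast subsequence for nonemptiness and the inclusion $K_N\subseteq\cl{K_{2^{1-N}}}$, total boundedness plus completeness for compactness). One cosmetic remark: since the paper uses $A_\delta$ for the $\delta$-neighbourhood of $A$, your expression $\cl{K_{2^{1-N}}}$ (neighbourhood of the limit set $K$) visually collides with the sequence notation $K_N$; renaming the limit set would avoid any ambiguity.
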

	
	\section{The main theorem}
	
	We could introduce a new Besicovitch set by simply taking the dual of the compact set given by Corollary \ref{cor.fourcornerinvisible}. It would have intersections of measure zero with every non-vertical line not contained in it by Proposition \ref{prop.everysection}. However, we will go further to obtain the following stronger result:
	
	\begin{theorem}\label{theorem.main}
		There exists a Besicovitch set $B=\bigcup\Lines$ (where $\Lines$ is a family of lines) in the plane such that:
		
		$(1)$ $B$ is closed.
		
		$(2)$ $B$ is of 2-dimensional Lebesgue measure zero.
		
		$(3)$ For every line $e\notin\Lines$ the intersection $B\cap e$ is of 1-dimensional Lebesgue measure zero.
		
		$(4)$ For every $e\in\Lines$ the intersection $e\cap\bigcup(\Lines\setminus\{e\})$ is of 1-dimensional Lebesgue measure zero.
		
		Moreover, we claim that these properties are typical in the sense described below.
	\end{theorem}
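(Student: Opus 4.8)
The plan is to realize $B$ as a finite union of \emph{rotated dual families}. Fix a rectangle $R=[c_1,c_2]\times[c_3,c_4]$ and let $X\subseteq\Compact(R)$ be the set of compact codes whose orthogonal projection to the $x$-axis is the whole interval $[c_1,c_2]$. Since projection is $1$-Lipschitz and Hausdorff-continuous, the condition $pr_x(K)\supseteq[c_1,c_2]$ is preserved under Hausdorff limits, so $X$ is closed in $\Compact(R)$ and hence complete by Theorem \ref{theorem.complete}; thus Baire's theorem (Theorem \ref{theorem.baire}) applies. For $K\in X$ the dual family $\Lines_K$ realizes every slope in $[c_1,c_2]$, i.e.\ every direction in a fixed arc $A\subseteq S^1$. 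Choosing finitely many rotations $\rho_1,\dots,\rho_M$ (with distinct vertical directions) so that the arcs $\rho_1 A,\dots,\rho_M A$ cover $S^1$, and taking codes $K_1,\dots,K_M\in X$, the set $B\defeq\bigcup_{j}\rho_j\big(\bigcup\Lines_{K_j}\big)$ contains a line in every direction, so it is a Besicovitch set. I will prove $(1)$--$(4)$ for it whenever each $K_j$ lies in a suitable residual subset of $X$.

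Next I would translate $(1)$--$(4)$ into conditions on the codes through Propositions \ref{prop.verticalsection} and \ref{prop.everysection}. Property $(1)$ is automatic: each $\bigcup\Lines_{K_j}$ is closed by Proposition \ref{prop.dualisclosed}, and a finite union of closed sets is closed. For the rest the decisive observation is that any line $e$ is parallel to the vertical direction of at most one frame $\rho_j$. In every frame where $e$ is \emph{non-vertical}, Proposition \ref{prop.everysection}$(2)$ exhibits $e\cap\rho_j(\bigcup\Lines_{K_j})$ as the image under a locally Lipschitz map of a radial projection of $K_j$; as locally Lipschitz maps preserve nullsets, this intersection is null as soon as $K_j$ is \emph{invisible} (Definition \ref{defi.invisibility}) from the relevant source point --- which lies outside $K_j$ when $e\notin\Lines$ (yielding $(3)$) and equals the coding point when $e\in\Lines$ (yielding $(4)$, via the punctured family). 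In the single frame where $e$ \emph{is} vertical, Proposition \ref{prop.everysection}$(1)$ shows the intersection is a scaled orthogonal projection of $K_j$, null provided $K_j$ has null projection in that direction; the same estimate fed through Fubini's theorem yields $(2)$. Hence it suffices to arrange, for each $j$, that \textbf{(I)} $K_j$ is invisible from every point of the plane, and \textbf{(II)} $\lambda(pr_\varphi(K_j))=0$ for every direction $\varphi$ other than the distinguished full one.

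Finally I would show that (I) and (II) each hold for a residual set of $K\in X$; their intersection is then residual, and any $K_j$ drawn from it works. For (II) I would use Lemma \ref{lemma.uppersemicont1}: since $K\mapsto\lambda(pr_\varphi(K))$ is upper semicontinuous in $K$ (Lebesgue measure being upper semicontinuous under Hausdorff convergence) while $\varphi\mapsto\lambda(pr_\varphi(K))$ is upper semicontinuous by the lemma, the sets $U_{n}\defeq\{K\in X:\ \lambda(pr_\varphi(K))<1/n \text{ for every } \varphi \text{ outside the } 1/n\text{-arc about the full direction}\}$ are open, and $\bigcap_n U_n$ is exactly the locus satisfying (II). Likewise, using Lemma \ref{lemma.uppersemicont2} in the radial setting, (I) is a countable intersection --- over rational $\varepsilon$ and rational balls of source points --- of analogously defined open sets. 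The crux is the \textbf{density} of these open sets: given an arbitrary $K\in X$ and $\delta>0$ I must produce a Hausdorff-$\delta$-close code that still projects fully onto the $x$-axis yet has the strong smallness property. For (II) I would approximate $K$ by a union of many tiny rescaled copies of the Talagrand set of Theorem \ref{theorem.talagrand}, arranged to fill the horizontal projection while keeping all other projections small; for (I) I would instead use rescaled copies of the invisible four-corner set of Corollary \ref{cor.fourcornerinvisible}. I expect this approximation step --- reconciling \emph{full} horizontal projection with \emph{vanishing} projection and visibility in all other directions, uniformly enough to land inside a single $U_n$ --- to be the main obstacle. Once density is established, Baire's theorem delivers the residual set, and the typicality assertion becomes the statement that the good tuples $(K_1,\dots,K_M)$ form a residual subset of $X^M$ in the sense of Definition \ref{defi.typical}.
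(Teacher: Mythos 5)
Your outline reproduces the paper's own strategy almost exactly: the same code space (compact sets with full horizontal projection, closed in the Hausdorff metric, hence completely metrizable), the same translation of $(1)$--$(4)$ via Propositions \ref{prop.verticalsection}, \ref{prop.everysection} and \ref{prop.dualisclosed}, the same two generic properties (null projections in every non-distinguished direction, plus invisibility from every point), and the same assembly of the final Besicovitch set from finitely many rotated copies. The density step you single out as ``the main obstacle'' is resolved in the paper exactly as you predict: take a finite $\frac{\varepsilon}{3}$-net of $K$, cover $K$ by the little squares around the net points (trimmed to $[0,1]^2$), and replace each square by a Talagrand set (Theorem \ref{theorem.talagrand}) for property (II), respectively by an invisible four-corner set (Corollary \ref{cor.fourcornerinvisible}) for property (I); the union of these pieces is Hausdorff-close to $K$ and still projects onto the full horizontal interval. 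So that part of your plan is sound and is not, in fact, where the real work lies.

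The genuine gap is your openness claim. You assert that $U_n$ is open because $K\mapsto\lambda(pr_\varphi(K))$ is upper semicontinuous for each fixed $\varphi$ and $\varphi\mapsto\lambda(pr_\varphi(K))$ is upper semicontinuous for each fixed $K$ (Lemma \ref{lemma.uppersemicont1}). Separate upper semicontinuity in the two variables does not give this: the radius of the Hausdorff ball around $K$ on which $\lambda(pr_\varphi(\cdot))<\frac1n$ depends on $\varphi$, and without uniform control over the compact set $T_n$ these radii may shrink to $0$ (a supremum over a compact parameter set of a merely separately upper semicontinuous function need not be upper semicontinuous --- the classical separately-but-not-jointly continuous examples defeat precisely this inference). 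To repair it you must either prove \emph{joint} upper semicontinuity of $(K,\varphi)\mapsto\lambda(pr_\varphi(K))$ (true, by the same finite rectangle-cover argument as in Lemma \ref{lemma.uppersemicont1}, but it has to be stated and proved), or do what the paper does: it never claims any set is open, but shows the bad sets $B_n$ are nowhere dense directly, by thickening the approximant to $\overline{K'_{\varepsilon_\varphi}}$, using $pr_\varphi\left(\overline{K'_{\varepsilon_\varphi}}\right)=\overline{\left(pr_\varphi(K')\right)_{\varepsilon_\varphi}}$, applying Lemma \ref{lemma.uppersemicont1} to the thickened compact set, and invoking compactness of $T_n$ to extract a single $\varepsilon'$ valid for all directions. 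The same caveat applies, with extra care, to your ``analogously defined open sets'' for invisibility: there the blow-up of $P_v$ near $v$ forces one to delete a ball $B\left(v,\frac1n\right)$ and restrict $v$ to $[-n,n]\times[-n,n]$, and the uniformity in $v$ is obtained from the Lipschitz estimate on annuli together with Lemma \ref{lemma.uppersemicont2} and compactness, exactly as in the proof of Theorem \ref{theorem.typical2}. With that step repaired, your argument goes through and coincides with the paper's.
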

	
	We work in $\Compact\left([0,1]^2\right)$ which is a complete metric space with the Hausdorff distance. Consider the subspace $$\Compsub\defeq\left\{K\in\Compact\left([0,1]^2\right):\ pr_{\frac{\pi}{2}}(K)=[0,1]\right\}.$$ It is easy to check that $\Compsub$ is a closed subspace hence a complete metric space as well. The typicality in the main theorem means that a typical $K'\in\Compsub$ codes a family of lines $\Lines'$ for which $L'=\bigcup{\Lines'}$ is an almost Besicovitch set: the union of four rotated copies of $L'$ satisfies all the properties in Theorem \ref{theorem.main}.
	
	The following theorem strengthens Theorem \ref{theorem.talagrand} and it is due to Alan Chang \cite{CHANG}. Here we present our own proof (found independently of Chang) to provide a useful analogue for the proof of the next theorem.
	
	\begin{theorem}\label{theorem.typical1}
		A typical element of $\Compsub$ has orthogonal projections of measure zero in every non-vertical direction.
	\end{theorem}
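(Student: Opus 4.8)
The plan is to show that the set of "bad" compact sets—those having an orthogonal projection of positive measure in some non-vertical direction—is of first category in $\Compsub$. Equivalently, I would exhibit a residual set of "good" sets whose projections vanish in every non-vertical direction. The natural way to organize this is to fix a countable dense collection of thresholds and directions and to write the good set as a countable intersection of open dense sets, invoking the Baire category theorem (Theorem \ref{theorem.baire}) together with completeness of $\Compsub$.

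More concretely, for rational $\varepsilon>0$ and a rational angle $\varphi\neq\frac{\pi}{2}$, consider the set
\[
U_{\varphi,\varepsilon}\defeq\left\{K\in\Compsub:\ \lambda(pr_\varphi(K))<\varepsilon\right\}.
\]
By Lemma \ref{lemma.uppersemicont1} the map $K\mapsto\lambda(pr_\varphi(K))$ is upper semicontinuous, so each $U_{\varphi,\varepsilon}$ is open in $\Compsub$. The set of good elements is then $\bigcap_{\varphi,\varepsilon}U_{\varphi,\varepsilon}$ over a countable dense set of angles, but a little care is needed here: controlling the projection at a dense set of angles does not immediately control it at all angles, so I would instead phrase the good set directly via the upper semicontinuity to pass from the dense rational angles to all non-vertical angles, or alternatively prove openness and density at each individual $(\varphi,\varepsilon)$ and take the countable intersection over a sequence of angles that is dense together with the semicontinuity bound. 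First I would verify openness (immediate from Lemma \ref{lemma.uppersemicont1}), then reduce the theorem to proving that each $U_{\varphi,\varepsilon}$ is dense.

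The heart of the argument, and the main obstacle, is \textbf{density}: given any $K\in\Compsub$ and any $\delta>0$, I must produce $K'\in U_{\varphi,\varepsilon}$ with $d_H(K,K')<\delta$, while preserving the constraint $pr_{\frac{\pi}{2}}(K')=[0,1]$. This is where Talagrand's construction (Theorem \ref{theorem.talagrand}) enters: it supplies, inside any small rectangle, a compact set whose vertical projection is the full base interval but whose projection in any other fixed direction has measure zero. My plan is to cover $K$ by a fine grid of small squares, and inside each square that meets $K$ replace the local piece of $K$ by an affinely-rescaled copy of the Talagrand set fitted to that square. Because each Talagrand piece still projects onto the full width of its square in the vertical direction, the union retains $pr_{\frac{\pi}{2}}(K')=[0,1]$; because the squares are small, the replacement stays within Hausdorff distance $\delta$; and because each piece has $\lambda(pr_\varphi)=0$ in the fixed direction $\varphi$, the total projection $pr_\varphi(K')$ is contained in the union of finitely many projected squares whose combined measure can be made less than $\varepsilon$ by taking the grid fine enough.

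The delicate point I would scrutinize is that Talagrand's theorem kills only \emph{one} prescribed direction per rectangle, whereas the conclusion demands smallness in \emph{every} non-vertical direction simultaneously. The resolution is that density need only be established for each fixed $\varphi$ separately, since the final good set is a countable intersection over $\varphi$ ranging in a countable set; semicontinuity then propagates the measure-zero conclusion from this countable set of directions to all of them. I would therefore be careful to build, for the fixed $\varphi$ under consideration, the approximant $K'$ using Talagrand sets oriented to annihilate exactly the direction $\varphi$, and to check that the resulting $K'$ lands in $\Compsub$ and is $\delta$-close to $K$. Assembling these pieces—openness from Lemma \ref{lemma.uppersemicont1}, density from the Talagrand replacement, and Baire's theorem—yields that a typical element of $\Compsub$ has measure-zero projection in every non-vertical direction.
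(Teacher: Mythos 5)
Your Baire-category framing and your density mechanism (cover $K$ by small squares, replace the piece in each square by a Talagrand set whose vertical projection fills the square's base) are exactly right, and they match the paper's construction. But there is a genuine gap at the point you flag and then wave away: the reduction to a countable set of directions cannot be repaired by semicontinuity, because the semicontinuity goes the wrong way. Lemma \ref{lemma.uppersemicont1} states that for a \emph{fixed} compact set $A$ the function $\varphi\mapsto\lambda(pr_\varphi(A))$ is upper semicontinuous in the \emph{angle}, and an upper semicontinuous function may vanish on a dense set of angles while being positive at a single exceptional angle (an upward spike at one point is perfectly usc). This is not a hypothetical pathology: take a Talagrand set from Theorem \ref{theorem.talagrand}, rotated so that its unique direction of positive projection is an irrational angle $\varphi_0\neq\frac{\pi}{2}$, and union it with an unrotated Talagrand set in $[0,1]^2$ so that the result lies in $\Compsub$. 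This compact set belongs to every one of your sets $U_{\varphi,\varepsilon}$ with $\varphi$ rational, yet $\lambda(pr_{\varphi_0}(K))>0$. So your residual set $\bigcap_{\varphi,\varepsilon}U_{\varphi,\varepsilon}$ strictly contains the good set, and your argument only proves the weaker statement that a typical $K$ has null projections in the chosen countable family of directions. (A smaller slip: openness of $U_{\varphi,\varepsilon}$ needs upper semicontinuity of $K\mapsto\lambda(pr_\varphi(K))$ in the Hausdorff metric, which is true and easy, but it is not what Lemma \ref{lemma.uppersemicont1} says.)

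The repair requires a different quantifier structure and the full strength of Theorem \ref{theorem.talagrand}, which you understate: Talagrand's set has null projection in \emph{every} non-vertical direction simultaneously, not in one prescribed direction per rectangle, so density never needs to be done ``one $\varphi$ at a time.'' The paper writes the bad set as $\bigcup_n B_n$ with $B_n\defeq\left\{K\in\Compsub:\ \exists\varphi\in T_n\quad\lambda(pr_\varphi(K))\geq\frac{1}{n}\right\}$, where $T_n$ is the \emph{compact} set of angles with $|\varphi-\frac{\pi}{2}|\geq\frac{1}{n}$, and proves each $B_n$ nowhere dense. The Talagrand replacement produces $K'$ near $K$ with all non-vertical projections null; the step your plan omits entirely is finding one radius $\varepsilon'>0$ so that the whole ball $B_H(K',\varepsilon')$ misses $B_n$, i.e.\ a bound uniform over \emph{all} angles in $T_n$ at once. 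This is where Lemma \ref{lemma.uppersemicont1} is actually used: for each $\varphi\in T_n$ choose $\varepsilon_\varphi$ with $\lambda\left(pr_\varphi\left(\overline{K'_{\varepsilon_\varphi}}\right)\right)<\frac{1}{n}$ (possible since $\lambda\left(\overline{(pr_\varphi(K'))_\delta}\right)\to\lambda(pr_\varphi(K'))=0$ as $\delta\to 0$), then upper semicontinuity in the angle gives a neighbourhood of $\varphi$ on which the same bound holds for the fixed compact set $\overline{K'_{\varepsilon_\varphi}}$, and compactness of $T_n$ extracts finitely many such neighbourhoods, yielding a uniform $\varepsilon'$. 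Any $L\in B_H(K',\varepsilon')$ satisfies $L\subseteq K'_{\varepsilon'}$ and hence avoids $B_n$. Without this uniformity over a compact set of angles, which per-direction open dense sets can never supply, the argument does not reach the stated conclusion.
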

	
	\begin{proof}
		We have to prove that the set $\{K\in\Compsub:\ \exists\varphi\in [0,\pi]\setminus\{\frac{\pi}{2}\}\quad \lambda(pr_\varphi(K))>0\}$ is of first category. Let $T_n=\left\{\varphi\in[0,\pi]:\ |\varphi-\frac{\pi}{2}|\geq\frac{1}{n}\right\}$. It suffices to show that for every $n$ $$B_n\defeq\left\{K\in\Compsub:\ \exists\varphi\in T_n\quad \lambda(pr_\varphi(K))\geq\frac{1}{n}\right\}$$ is nowhere dense in $\Compsub$.
		
		Fix a compact set $K\in\Compsub$ and $\varepsilon>0$. Denote the open ball of center $A$ and radius $\delta$ by $B_H(A,\delta)$ (with respect to the Hausdorff distance). We need to find $K'\in\Compsub$ and $\varepsilon'>0$ such that $B_H(K',\varepsilon')\subseteq B_H(K,\varepsilon)$ and $B_H(K',\varepsilon')\cap B_n=\emptyset$.
		
		At first we construct $K'$. Take a finite $\frac{\varepsilon}{3}$-net in $K$: $\{(x_1,y_1),\dots,(x_N,y_N)\}$. Consider the squares of the form $$Q_i\defeq\left[x_i-\frac{\varepsilon}{3},x_i+\frac{\varepsilon}{3}\right]\times\left[y_i-\frac{\varepsilon}{3},y_i+\frac{\varepsilon}{3}\right]\qquad (1\leq i\leq N).$$
		Some of the squares may not lie in $[0,1]^2$. We cut off the parts sticking out of $[0,1]^2$ making $Q_i$ a rectangle if it is necessary. Since it was created from an $\frac{\varepsilon}{3}$-net, $\bigcup_{i=1}^N Q_i$ covers $K$. Hence its projection to the $x$-axis is the whole $[0,1]$. For every rectangle $Q_i$ Theorem \ref{theorem.talagrand} gives us a compact set $K_i'\subseteq Q_i$ which has orthogonal projections of measure zero in every non-vertical direction and $pr_{\frac{\pi}{2}}(K_i')=pr_{\frac{\pi}{2}}(Q_i)$. Now let $K'=\bigcup_{i=1}^N K_i'$.
		
		We need to check the following:
		
		(1) $K'\in\Compsub$,
		
		(2) $K'\in B_H(K,\varepsilon)$ and
		
		(3) $\lambda(pr_\varphi(K'))<\frac{1}{n}$ for all $\varphi\in T_n$.
		
		(1) This is clear since $pr_{\frac{\pi}{2}}\left(\bigcup_{i=1}^N Q_i\right)=[0,1]$ and $pr_{\frac{\pi}{2}}(K_i')=pr_{\frac{\pi}{2}}(Q_i)$ in each $Q_i$.
		
		(2) The following two sequences of containments prove that $d_H(K,K')<\varepsilon$. $$K'\subseteq\bigcup_{i=1}^N{Q_i}\subseteq\{(x_1,y_1),\dots,(x_N,y_N)\}_{\frac{2}{3}\varepsilon}\subseteq K_{\frac{2}{3}\varepsilon}$$
		$$K\subseteq\{(x_1,y_1),\dots,(x_N,y_N)\}_{\frac{1}{3}\varepsilon}\subseteq \left(K'_{\frac{\sqrt{2}}{3}\varepsilon}\right)_{\frac{1}{3}\varepsilon}\subseteq K'_{\frac{\sqrt{2}+1}{3}\varepsilon}$$
		
		(3) $K'$ is the union of $N$ sets whose projection is of measure zero in every non-vertical direction.
		
		Now we have to find $\varepsilon'$.
		
		It is very easy to check that for any compact set $A$, positive real number $\delta$ and angle $\varphi$ the following holds: $pr_\varphi(\overline{A_\delta})=\overline{(pr_\varphi(A))_\delta}$.
		
		For every $\varphi$ the projection $pr_\varphi(K')$ is compact, so we have $$\lim_{\delta\to 0}\lambda\left(\overline{(pr_\varphi(K'))_\delta}\right)=\lambda(pr_\varphi(K')).$$ Hence there exists $\varepsilon_\varphi$ for each $\varphi\in T_n$ such that $$\lambda\left(pr_\varphi\left(\overline{K_{\varepsilon_\varphi}'}\right)\right)=\lambda\left(\overline{(pr_\varphi(K')_{\varepsilon_\varphi}}\right)<\frac{1}{n}.$$ The upper semicontinuity ensured by Lemma \ref{lemma.uppersemicont1} for $A=\overline{K_{\varepsilon_\varphi}'}$ says that there exists a $\delta_\varphi$ such that for any $\varphi'\in(\varphi-\delta_\varphi,\varphi+\delta_\varphi)$ the projection is small enough: $\lambda(pr_{\varphi'}\left(\overline{K_{\varepsilon_\varphi}'}\right))<\frac{1}{n}$. On the other hand, $T_n$ is compact, therefore it is covered by finitely many of these neighbourhoods, which gives us finitely many conditions. Hence we can choose $\varepsilon'$ so that $\lambda(pr_{\varphi}(K_{\varepsilon'}'))<\frac{1}{n}$ for all $\varphi\in T_n$. Since every element of $B_H(K',\varepsilon')$ lies in $K_{\varepsilon'}'$, we proved $B_H(K',\varepsilon')\cap B_n=\emptyset$.
		
		If it is necessary, we decrease $\varepsilon'$ further to satisfy $B_H(K',\varepsilon')\subseteq B_H(K,\varepsilon)$.
	\end{proof}
	
	\begin{theorem}\label{theorem.typical2}
		A typical $K\in\Compsub$ is invisible from every point of the plane.
	\end{theorem}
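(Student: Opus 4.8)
The plan is to mirror the proof of Theorem \ref{theorem.typical1}, with orthogonal projections replaced by radial ones and Theorem \ref{theorem.talagrand} replaced by Corollary \ref{cor.fourcornerinvisible}; the goal is to show that the bad set $\{K\in\mathcal{C}:\ \exists a\in\mathbb{R}^2,\ \lambda(P_a(K))>0\}$ is of first category. The naive analogue of $B_n$, namely $\{K:\ \exists a,\ \lambda(P_a(K))\ge\frac1n\}$, is unfortunately \emph{dense}: for $a\in K$ the perturbation $K\cup\overline{B(a,r)}$ is Hausdorff-small yet $\lambda(P_a(K\cup\overline{B(a,r)}))=2\pi$. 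This is the essential difference from the orthogonal case — there a small disk has a small projection — and it is the main obstacle.

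To bypass it I would decompose using \textbf{far} radial projections. For $\rho>0$ put $P_a^{\ge\rho}(K)\defeq\{(x-a)/|x-a|:\ x\in K,\ |x-a|\ge\rho\}$. Since $\lambda(P_a^{\ge\rho}(K))$ increases to $\lambda(P_a(K))$ as $\rho\downarrow0$, and since $\lambda(P_a([0,1]^2))\to0$ as $|a|\to\infty$ forces every witnessing point into a fixed compact disk $D_n\subseteq\mathbb{R}^2$, the bad set is contained in $\bigcup_{n,m}B_{n,m}$, where
$$B_{n,m}\defeq\left\{K\in\mathcal{C}:\ \exists a\in D_n,\ \lambda\!\left(P_a^{\ge 1/m}(K)\right)\ge\tfrac1n\right\}.$$
The crucial gain is that the disk perturbation no longer helps: once $r<\frac1m$ the added disk lies within $\frac1m$ of $a$ and contributes nothing to $P_a^{\ge1/m}$. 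It remains to prove that each $B_{n,m}$ is nowhere dense.

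Fix $K_0\in\mathcal{C}$ and $\varepsilon>0$. Exactly as in Theorem \ref{theorem.typical1}, I take a finite $\frac{\varepsilon}{3}$-net of $K_0$, cover it by the rectangles $Q_i$, and set $K'=\bigcup_iK_i'$, where now each $K_i'\subseteq Q_i$ is the invisible compact set from Corollary \ref{cor.fourcornerinvisible} with $pr_{\frac{\pi}{2}}(K_i')=pr_{\frac{\pi}{2}}(Q_i)$. Then $K'\in\mathcal{C}$ and $K'\in B_H(K_0,\varepsilon)$ by the same containments, and since a finite union of sets invisible from every point is invisible from every point, $\lambda(P_a^{\ge\rho}(K'))\le\lambda(P_a(K'))=0$ for all $a$ and all $\rho$. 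Any $K''\in B_H(K',\varepsilon')$ satisfies $K''\subseteq\overline{K'_{\varepsilon'}}$, and the points of $K''$ at distance $\ge\frac1m$ from $a$ lie among those of $\overline{K'_{\varepsilon'}}$ at distance $\ge\frac1m$ from $a$, so
$$\lambda\!\left(P_a^{\ge1/m}(K'')\right)\le\lambda\!\left(P_a^{\ge1/m}\!\left(\overline{K'_{\varepsilon'}}\right)\right)\eqdef g_{\varepsilon'}(a).$$
Thus it suffices to pick $\varepsilon'$ (and then shrink it so that $B_H(K',\varepsilon')\subseteq B_H(K_0,\varepsilon)$) with $g_{\varepsilon'}(a)<\frac1n$ for every $a\in D_n$.

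This last step is the radial counterpart of the $\varepsilon'$-selection in Theorem \ref{theorem.typical1}, and it is where the far projection pays off a second time. For fixed $a$ one has $g_{\varepsilon'}(a)\to\lambda(P_a^{\ge1/m}(K'))=0$ as $\varepsilon'\to0$, because a point of $\overline{K'_{\varepsilon'}}$ at distance $\ge\frac1m$ from $a$ points in a direction within $\arcsin(m\varepsilon')$ of a genuine direction of $P_a^{\ge 1/m-\varepsilon'}(K')\subseteq P_a(K')$, a compact nullset whose $\arcsin(m\varepsilon')$-neighbourhood has measure tending to $0$. To pass from ``for each $a$'' to ``uniformly on the compact $D_n$ with one $\varepsilon'$'', I would use that for fixed compact $A$ and fixed $\rho>0$ the map $a\mapsto\lambda(P_a^{\ge\rho}(A))$ is upper semicontinuous on \emph{all} of $\mathbb{R}^2$ — the key technical point, strengthening Lemma \ref{lemma.uppersemicont2}, which was restricted to $a\notin A$. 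It holds at every $a$ (even $a\in A$) by a closed-convergence argument: if $a_k\to a$ and $\theta_k\in P_{a_k}^{\ge\rho}(A)$ with $\theta_k\to\theta$, choosing $x_k\in A$ with $|x_k-a_k|\ge\rho$ realizing $\theta_k$ and passing to a convergent subsequence $x_k\to x\in A$ gives $|x-a|\ge\rho$ and $\theta=(x-a)/|x-a|\in P_a^{\ge\rho}(A)$; deleting the $\rho$-ball around $a$ is precisely what prevents the directions from escaping to all of $S^1$ in the limit, and a routine compactness packaging then upgrades this to upper semicontinuity of the measure. Granting it, I fix for each $a\in D_n$ an $\varepsilon'_a$ with $g_{\varepsilon'_a}(a)<\frac1n$, apply the upper semicontinuity of $v\mapsto\lambda(P_v^{\ge1/m}(\overline{K'_{\varepsilon'_a}}))$ to get a neighbourhood of $a$ on which the value stays below $\frac1n$, cover $D_n$ by finitely many such neighbourhoods, and let $\varepsilon'$ be the minimum of the corresponding $\varepsilon'_a$; monotonicity in $\varepsilon'$ yields $g_{\varepsilon'}(a)<\frac1n$ for all $a\in D_n$. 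Hence $B_H(K',\varepsilon')\cap B_{n,m}=\emptyset$, so $B_{n,m}$ is nowhere dense, and Theorem \ref{theorem.baire} completes the proof.
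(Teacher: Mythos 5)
Your proof is correct, and its skeleton coincides with the paper's: you decompose the bad set using radial projections truncated away from the base point (your $P_a^{\geq\rho}$ is exactly the paper's $P_v(K\setminus B(v,\frac{1}{n}))$), you build $K'$ from the invisible sets of Corollary \ref{cor.fourcornerinvisible} over an $\frac{\varepsilon}{3}$-net exactly as in Theorem \ref{theorem.typical1}, and you uniformize the choice of $\varepsilon'$ over a compact set of base points by upper semicontinuity plus compactness. (Your two-index family $B_{n,m}$ with the disk $D_n$ coming from the decay $\lambda(P_a([0,1]^2))\to 0$ is only cosmetically different from the paper's single-index $B_n$ with window $[-n,n]\times[-n,n]$.) The one genuine divergence is how the two arguments get around the failure of Lemma \ref{lemma.uppersemicont2} at points of the set itself. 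You prove a strengthened lemma: for fixed compact $A$ and $\rho>0$, the map $a\mapsto\lambda(P_a^{\geq\rho}(A))$ is upper semicontinuous on \emph{all} of $\Real^2$, including at $a\in A$; your sketch is sound, since the Kuratowski upper-limit containment plus compactness of $S^1$ shows that $P_{a_k}^{\geq\rho}(A)$ is eventually contained in any open neighbourhood of $P_a^{\geq\rho}(A)$, and outer regularity finishes. The paper instead keeps Lemma \ref{lemma.uppersemicont2} as stated (u.s.c.\ only off $A$) and engineers around its restriction: it applies the lemma to $A=\overline{K'_{\varepsilon_v}}\setminus B\left(v,\frac{1}{2n}\right)$ --- deleting the ball makes $v$ exterior to $A$ --- and transfers the bound to nearby base points via the monotonicity $\overline{K'_{\varepsilon_v}}\setminus B\left(v',\frac{1}{n}\right)\subseteq\overline{K'_{\varepsilon_v}}\setminus B\left(v,\frac{1}{2n}\right)$ for $v'\in B\left(v,\frac{1}{2n}\right)$, which is precisely the role your truncation plays. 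Your route costs a new lemma but is cleaner and reusable; the paper's costs only a nested-balls trick and no new machinery.

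One small point to tighten in your pointwise estimate $g_{\varepsilon'}(a)\to 0$: the comparison set $P_a^{\geq 1/m-\varepsilon'}(K')$ varies with $\varepsilon'$, so "its $\arcsin(m\varepsilon')$-neighbourhood has measure tending to $0$" needs a fixed reference set. Embed it in the fixed compact nullset $P_a^{\geq 1/(2m)}(K')$ (valid once $\varepsilon'\leq\frac{1}{2m}$) and shrink neighbourhoods of that set instead --- this is the same device the paper uses with its two radii $\frac{1}{4n}$ and $\frac{1}{2n}$.
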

	\begin{proof}
		The proof is very similar to the previous one. We need to prove that $\{K\in\Compsub:\ \exists v\in\Real^2\quad \lambda(P_v(K))>0\}$ is of first category.
		
		First observe that for any point $v\in\Real^2$ and compact set $K\subseteq\Real^2$ $$P_v(K)=\bigcup_{n=1}^\infty P_v\left(K\setminus B\left(v,\frac{1}{n}\right)\right),$$
		which implies $$\lambda(P_v(K))=\lim_{n\to\infty}\lambda\left(P_v\left(K\setminus B\left(v,\frac{1}{n}\right)\right)\right).$$
		Therefore, it suffices to show that $$B_n\defeq \left\{K\in\Compsub:\ \exists v\in[-n,n]\times[-n,n]\quad \lambda\left(P_v\left(K\setminus B\left(v,\frac{1}{n}\right)\right)\right)\geq\frac{1}{n}\right\}$$ is nowhere dense.
		
		Fix  $K\in\Compsub$ and $\varepsilon>0$. Then take a finite $\frac{\varepsilon}{3}$-net $\{(x_1,y_1),\dots,(x_N,y_N)\}$ in $K$ and consider the little squares of side length $\frac{2\varepsilon}{3}$ around them. After chopping off the parts outside $[0,1]^2$ we get the rectangles $Q_1,\dots,Q_N$.
		
		Now for every $Q_i$, Corollary \ref{cor.fourcornerinvisible} gives us a compact set $K_i'\subseteq Q_i$ which is invisible from every point of the plane, and $pr_{\frac{\pi}{2}}(K_i')=pr_{\frac{\pi}{2}}(Q_i)$. Let $K'=\bigcup_{i=1}^N K_i'$. Then $K'$ is also invisible from every point of the plane. Exactly the same argument as in the previous proof shows that $K'\in\Compsub$ and $d_H(K,K')<\varepsilon$ holds.
		
		Now we have to find $\varepsilon'$.
		
		\textbf{Claim.} For every $n\in\Nat$ and $v\in [-n,n]\times[-n,n]$ there exists $\varepsilon_v$ such that $\lambda\left(P_v\left(\overline{K_{\varepsilon_v}'}\setminus B\left(v,\frac{1}{2n}\right)\right)\right)<\frac{1}{n}$.
		
		Fix $n$ and $v$. Restricting the radial projection to an annulus of inner radius $\frac{1}{4n}$ centered at $v$ it becomes a Lipschitz function with Lipschitz constant $4n$. Since $P_v\left(K'\setminus B\left(v,\frac{1}{4n}\right)\right)$ is a compact set of measure zero (recall that even $K'$ is invisible from $v$), we know that $$\lim_{\delta\to 0}\lambda\left(\left(P_v\left(K'\setminus B\left(v,\frac{1}{4n}\right)\right)\right)_\delta\right)=\lambda\left(P_v\left(K'\setminus B\left(v,\frac{1}{4n}\right)\right)\right)=0.$$ Thus for a suitably small $\delta\leq 1$ we have $\lambda\left(P_v\left(K'\setminus B\left(v,\frac{1}{4n}\right)\right)\right)_\delta)<\frac{1}{n}$. Now we claim that $$P_v\left(K_{\frac{\delta}{4n}}'\setminus B\left(v,\frac{1}{2n}\right)\right)\subseteq \left(P_v\left(K'\setminus B\left(v,\frac{1}{4n}\right)\right)\right)_\delta.$$ Indeed, if $x\in K_{\frac{\delta}{4n}}'\setminus B\left(v,\frac{1}{2n}\right)$, then there exists $y\in K'\setminus B\left(v,\frac{1}{4n}\right)$ such that $|x-y|<\frac{\delta}{4n}\leq \frac{1}{4n}$. Therefore $|P_v(x)-P_v(y)|<\delta$ because of the Lipschitz property, and $P_v(y)\in P_v\left(K'\setminus B\left(v,\frac{1}{4n}\right)\right)$, so $P_v(x)\in\left(P_v\left(K'\setminus B\left(v,\frac{1}{4n}\right)\right)\right)_\delta$. Hence $\varepsilon_v=\frac{\delta}{5n}$ is a good choice.
		
		If $\varepsilon_v$ is suitable for $v$, then for every $v'\in B\left(v,\frac{1}{2n}\right)$ $$\overline{K_{\varepsilon_v}'}\setminus B\left(v',\frac{1}{n}\right)\subseteq \overline{K_{\varepsilon_v}'}\setminus B\left(v,\frac{1}{2n}\right)$$ therefore $$\lambda\left(P_{v'}\left(\overline{K_{\varepsilon_v}'}\setminus B\left(v',\frac{1}{n}\right)\right)\right)\leq\lambda\left(P_{v'}\left(\overline{K_{\varepsilon_v}'}\setminus B\left(v,\frac{1}{2n}\right)\right)\right).$$
		For $A=\overline{K_{\varepsilon_v}'}\setminus B\left(v,\frac{1}{2n}\right)$ the function $F_A$ is upper semicontinuous on the complement of $A$ by Lemma \ref{lemma.uppersemicont2}. Hence there exists $U_v\subseteq B\left(v,\frac{1}{2n}\right)$ neighbourhood of $v$ such that for all $v'\in U_v$ $$\lambda\left(P_{v'}\left(\overline{K_{\varepsilon_v}'}\setminus B\left(v',\frac{1}{n}\right)\right)\right)\leq\lambda\left(P_{v'}\left(\overline{K_{\varepsilon_v}'}\setminus B\left(v,\frac{1}{2n}\right)\right)\right)=F_A(v')<\frac{1}{n}.$$
		
		Since $[-n,n]\times[-n,n]$ is compact, it can be covered by finitely many such neighbourhoods, therefore we may choose an $\varepsilon'$ which is suitable for all $v\in [-n,n]\times[-n,n]$.
		
		We need to prove that $B_n\cap B_H(K',\varepsilon')=\emptyset$ holds. Let $L\in B_H(K',\varepsilon')$ and $v\in [-n,n]\times[-n,n]$. Then $L\subseteq K_{\varepsilon'}'$ hence $$\lambda\left(P_v\left(L\setminus B\left(v,\frac{1}{n}\right)\right)\right)\leq\lambda\left(P_v\left(K_{\varepsilon'}'\setminus B\left(v,\frac{1}{n}\right) \right)\right)<\frac{1}{n}$$ by the choice of $\varepsilon'$. Consequently, $L\notin B_n$.
	\end{proof}
	
	Now we have two typical properties in $\Compsub$ by Theorem \ref{theorem.typical1} and Theorem \ref{theorem.typical2}, so we may merge them into one corollary.
	
	\begin{cor}\label{cor.typical}
		A typical element $K\in\Compsub$ has orthogonal projections of measure zero in every non-vertical direction, and it is invisible from every point of the plane.
	\end{cor}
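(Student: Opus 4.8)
The plan is to invoke the elementary stability of residual sets under finite intersection. By Definition \ref{defi.typical} a property is typical in $\Compsub$ precisely when the set of elements satisfying it is residual, and by Definition \ref{defi.firstcategory} a set is residual exactly when its complement is of first category. So the whole task reduces to combining the two already-proved residuality statements.

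First I would name the two sets in question. Let
\[
P_1=\left\{K\in\Compsub:\ \lambda(pr_\varphi(K))=0\ \text{for every}\ \varphi\in[0,\pi]\setminus\left\{\tfrac{\pi}{2}\right\}\right\}
\]
and
\[
P_2=\left\{K\in\Compsub:\ K\ \text{is invisible from every}\ v\in\Real^2\right\}.
\]
By Theorem \ref{theorem.typical1} the set $P_1$ is residual in $\Compsub$, so its complement $P_1^c$ is of first category; likewise Theorem \ref{theorem.typical2} gives that $P_2$ is residual, hence $P_2^c$ is of first category.

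Next I would observe that the set described in the corollary is exactly $P_1\cap P_2$, and that its complement satisfies $(P_1\cap P_2)^c=P_1^c\cup P_2^c$. Since a first-category set is by definition a countable union of nowhere dense sets, a union of two first-category sets is again such a countable union, hence of first category. Therefore $(P_1\cap P_2)^c$ is of first category, which means $P_1\cap P_2$ is residual. By Definition \ref{defi.typical} this says precisely that a typical $K\in\Compsub$ lies in $P_1\cap P_2$, i.e.\ has orthogonal projections of measure zero in every non-vertical direction and is simultaneously invisible from every point of the plane.

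There is essentially no obstacle here: all the content sits in the two preceding theorems, and the corollary is a purely formal consequence of the fact that the union of finitely many first-category sets is of first category. The only point one must keep in mind is that $\Compsub$ is a complete metric space, as was noted right after the statement of Theorem \ref{theorem.main}, so that by the Baire category theorem (Theorem \ref{theorem.baire}) residual sets are nonempty and ``typical'' is a genuinely meaningful notion; without completeness the conclusion would be vacuous.
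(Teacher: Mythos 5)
Your proof is correct and matches the paper's approach exactly: the paper simply ``merges'' Theorems \ref{theorem.typical1} and \ref{theorem.typical2}, which is precisely your observation that the intersection of two residual sets is residual since the union of two first-category sets is of first category. You merely spell out the formal details that the paper leaves implicit.
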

	
	\begin{proof}[Proof of Theorem \ref{theorem.main}]
		
		Let $K'$ be a typical element in $\Compsub$, $\Lines'$ be its dual and $L'\defeq\bigcup\Lines'$. Then $L'$ contains a line of slope $m$ for every $m\in[0,1]$ because the slope is coded by the first coordinate and $pr_{\frac{\pi}{2}}(K')=[0,1]$.
		
		(1) $L'$ is closed by Proposition \ref{prop.dualisclosed}.
		
		(3) Let $e$ be any vertical line. Then its intersection with $L'$ is similar to a non-vertical orthogonal projection of $K'$ by Proposition \ref{prop.everysection}. Therefore, it is of measure zero by Corollary \ref{cor.typical}. This implies (2) immediately.
		
		Now let $e$ be any non-vertical line not in $\Lines'$. Then its intersection with $L'$ is the image of $P_v(K')\setminus\{\frac{\pi}{2},\frac{3\pi}{2}\}$ by a locally Lipschitz function for some point $v\in\Real^2\setminus K'$ (Proposition \ref{prop.everysection} again). Therefore it is of measure zero by Corollary \ref{cor.typical}.
		
		So $L'$ has an intersection of measure zero with every line not contained in it.
		
		(4) Let $e\in\Lines'$ and let $y=a_0x+b_0$ be its equation. Now $\Lines'\setminus\{e\}$ is the dual of $K'\setminus\{(a_0,b_0)\}$, thus the intersection $e\cap\bigcup\left(\Lines'\setminus\{e\}\right)$ is the image of $P_{(a_0,b_0)}(K'\setminus\{(a_0,b_0)\})\setminus\{\frac{\pi}{2},\frac{3\pi}{2}\}$ by a locally Lipschitz function (again Proposition \ref{prop.everysection}). Therefore it is of measure zero by Corollary \ref{cor.typical}. 
		
		Let $B$ be the union of four rotated copies of $L'$. Finally it contains a line in every direction and we have not lost its already checked properties. The proof of the main theorem is complete.
	\end{proof}

	\textbf{Acknowledgement.} I would like to thank Tamás Keleti for the helpful guidence and discussions.


\begin{thebibliography}{10}
		\bibitem{BES} A. S. Besicovitch, \textit{On Kakeya's problem and a similar one}, Math. Z.
		\textbf{27} (1928), 312-20.
		\bibitem{FALCONER} K. J. Falconer, \textit{The geometry of fractal sets}, Cambridge University Press, 1985.
		\bibitem{DAVIES} R. O. Davies, \textit{Some remarks on the Kakeya problem}, Math. Proc. Camb. Philos. Soc. \textbf{69} (2008), 417-421.
		\bibitem{TAO} T. Tao, \textit{From Rotating Needles to Stability of Waves: Emerging Connections between
		Combinatorics, Analysis, and PDE}, Notices Am. Math. Soc. \textbf{48} (2001), 294-303.
		\bibitem{KÖRNER} T. W. Körner, \textit{Besicovitch via Baire}, Stud. Math. \textbf{158} (2003), 65-78.
		\bibitem{TALAGRAND} M. Talagrand, \textit{Sur la mesure de la projection d'un compact et certaines familles de cercles}, Bull. Sci. Math. \textbf{104} (1980), 225–231.
		\bibitem{SCIDIR} W. Hansen, \textit{Littlewood's one-circle problem, revisited}, Expo. Math. \textbf{26} (2008), 365-374.
		\bibitem{SIMSOL} K. Simon, B. Solomyak, \textit{Visibility for self-similar sets of dimension one in the plane}, Real Anal. Exch. \textbf{32} (2007), 67-78.
		\bibitem{CHANG} A. Chang, \textit{Constructions of planar Besicovitch sets}, \url{http://math.uchicago.edu/~ac/alan_chang_topic_proposal_planar_besicovitch.pdf#page7}, unpublished topic proposal (2016).
	\end{thebibliography}
\end{document}